\numberwithin{equation}{section}
\theoremstyle{plain}
\newtheorem{theorem}{Theorem}[section]
\newtheorem{lemma}[theorem]{Lemma}
\newtheorem{proposition}[theorem]{Proposition}
\newtheorem{corollary}[theorem]{Corollary}
\newtheorem*{problem}{Problem}
\theoremstyle{definition}
\newtheorem{definition}{Definition}[section]
\newtheorem{example}{Example}[section]
\newtheorem*{notation}{Notation}
\newtheorem{assumption}{Assumption}[section]
\theoremstyle{remark}
\newtheorem*{remark}{Remark}
\newcommand{\norm}[1]{\left\|#1\right\|}
\newcommand{\abs}[1]{\left\vert#1\right\vert}
\newcommand{\spr}[1]{\left\langle\,#1\,\right\rangle}
\newcommand{\kl}[1]{\left(#1\right)}
\newcommand{\vl}{\, \vert \,}
\newcommand{\supp}{\operatorname{supp}}
\newcommand{\intV}[1]{\int\limits_\Omega #1 \, dx}
\newcommand{\intVo}[1]{\int\limits_\Oo #1 \, dx}
\newcommand{\meas}[1]{\operatorname{meas}\kl{#1}}
\newcommand{\sprD}[1]{\left\langle \, #1 \, \right\rangle_{\Vs,V}}
\newcommand{\sprf}[1]{\left\langle \, #1 \, \right\rangle_{\HmoO,\HoO}}
\newcommand{\sprgT}[1]{\left\langle \, #1 \, \right\rangle_{\HmohObT,\HohbT}}
\newcommand{\normDsD}[1]{\left\|#1\right\|_{\Vs,V}}
\newcommand{\normDDs}[1]{\left\|#1\right\|_{V,\Vs}}
\newcommand{\normHDs}[1]{\left\|#1\right\|_{\HoO,\Vs}}
\newcommand{\grad}{\nabla}
\renewcommand{\div}[1]{\operatorname{div}\left(#1\right)}
\newcommand{\mE}[1]{\mathcal{E}\left(#1\right)}
\newcommand{\D}{\mathcal{D}}
\newcommand{\Range}{\mathcal{R}}
\newcommand{\Ds}{\mathcal{D}_s}
\newcommand{\HsO}{{H^s(\Omega)}}
\newcommand{\HoO}{{H^1(\Omega)}}
\newcommand{\HohbT}{{H^{\frac{1}{2}}(\GT)}}
\newcommand{\HohbD}{{H^{\frac{1}{2}}(\GD)}}
\newcommand{\HtO}{{H^2(\Omega)}}
\newcommand{\HtOo}{{H^2(\Oo)}}
\newcommand{\HmoO}{{H^{-1}(\Omega)}}
\newcommand{\HmohObT}{{H^{-\frac{1}{2}}(\Gamma_T)}}
\newcommand{\LoO}{{L^1(\Omega)}}
\newcommand{\LtO}{{L^2(\Omega)}}
\newcommand{\LtOo}{{L^2(\Oo)}}
\newcommand{\LiO}{{L^\infty(\Omega)}}
\newcommand{\xD}{x^\dagger}
\newcommand{\xkd}{x_k^\delta}
\newcommand{\xkmd}{x_{k-1}^\delta}
\newcommand{\xkpd}{x_{k+1}^\delta}
\newcommand{\xksd}{x_{k_*}^\delta}
\newcommand{\zkd}{z_{k}^\delta}
\newcommand{\skd}{s_k^\delta}
\newcommand{\akd}{\alpha_k^\delta}
\newcommand{\okd}{{\omega_k^\delta}}
\newcommand{\rb}{\bar{\rho}}
\newcommand{\Btr}{\mathcal{B}_{2\rho}}
\newcommand{\Brb}{\mathcal{B}_{\bar{\rho}} }
\newcommand{\cK}{c_K}
\newcommand{\cT}{c_T}
\newcommand{\cF}{c_F}
\newcommand{\cEs}{c_E^s}
\newcommand{\cLM}{c_{LM}}
\newcommand{\cNL}{c_{NL}}
\newcommand{\cR}{c_{R}}
\newcommand{\cG}{c_{G}}
\newcommand{\cP}{c_{P}}
\newcommand{\mb}{\underline{\mu}}
\newcommand{\mub}{\bar{\mu}}
\newcommand{\lb}{\bar{\lambda}}
\newcommand{\ub}{\bar{u}}
\newcommand{\uh}{\hat{u}}
\newcommand{\ut}{\tilde{u}}
\newcommand{\At}{\tilde{A}}
\newcommand{\lh}{\hat{\lambda}}
\newcommand{\muh}{\hat{\mu}}
\newcommand{\Oo}{{\Omega_1}}
\newcommand{\E}{\mathcal{E}}
\newcommand{\ks}{{k_*}}
\newcommand{\GD}{{\Gamma_D}}
\newcommand{\GT}{{\Gamma_T}}
\newcommand{\bO}{{\partial \Omega}}
\newcommand{\eps}{\varepsilon}
\newcommand{\fot}{\frac{1}{t}}
\newcommand{\lkd}{\lambda^\delta_k}
\newcommand{\vs}{v^*}
\newcommand{\vsb}{\bar{v}^*}
\newcommand{\vb}{\bar{v}}
\newcommand{\Brh}{\mathcal{B}_{r_1,r_2}^{h_1,h_2}}
\newcommand{\Srh}{\mathcal{S}_{r_1,r_2}^{h_1,h_2}}
\newcommand{\Mmb}{\mathcal{M}(\mb)}
\newcommand{\hh}{h_\lambda,h_\mu}
\newcommand{\lD}{{\lambda^\dagger}}
\newcommand{\mD}{{\mu^\dagger}}
\newcommand{\lm}{{\lambda,\mu}}
\newcommand{\lmD}{{\lambda^\dagger,\mu^\dagger}}
\newcommand{\lmz}{{\lambda_0,\mu_0}}
\newcommand{\lmth}{{\lambda +  t h_\lambda,\mu +t h_\mu}}
\newcommand{\lmb}{{\bar{\lambda},\bar{\mu}}}
\newcommand{\lmbd}{{\bar{\lambda}- \lambda, \bar{\mu}-\mu}}
\newcommand{\lDz}{{\lambda^\dagger - \lambda_0}}
\newcommand{\mDz}{{\mu^\dagger - \mu_0}}
\newcommand{\alm}{a_{\lm}}
\newcommand{\almb}{a_{\lmb}}
\newcommand{\ahh}{a_{\hh}}
\newcommand{\almbd}{a_{\lmbd}}
\newcommand{\Alm}{A_{\lm}}
\newcommand{\AlmD}{A_{\lmD}}
\newcommand{\Almth}{A_{\lmth}}
\newcommand{\Almb}{A_{\lmb}}
\newcommand{\Almbd}{A_{\lmbd}}
\newcommand{\Ahh}{A_{\hh}}
\newcommand{\Atlm}{\tilde{A}_{\lm}}
\newcommand{\Atlmb}{\tilde{A}_{\lmb}}
\newcommand{\Atlmth}{\tilde{A}_{\lmth}}
\newcommand{\Atlmbd}{\tilde{A}_{\lmbd}}
\newcommand{\Athh}{\tilde{A}_{\hh}}
\newcommand{\Glm}{{G_{\lm}}}
\newcommand{\ud}{u^\delta}
\newcommand{\ulm}{u(\lm)}
\newcommand{\ulmth}{u(\lmth)}
\newcommand{\gD}{g_D}
\newcommand{\gT}{g_T}
\newcommand{\Vs}{{V^*}}
\newcommand{\HoS}{{H^{1}_{0,\Gamma_D}(\Omega)}}
\newcommand{\WoiO}{{W^{1,\infty}(\Omega)}}
\newcommand{\WoiOo}{{W^{1,\infty}(\Oo)}}
\newcommand{\n}{\vec{n}}
\newcommand{\Fc}{F_c}
\newcommand{\Es}{{E_s}}
\newcommand{\vt}{\tilde{v}}
\newcommand{\Frs}{F\vert_{\Ds(F)}}
\title{Lam\'e Parameter Estimation from Static Displacement Field Measurements in the Framework of Nonlinear Inverse Problems}
\author{
Simon Hubmer\footnote{Johannes Kepler University Linz, Doctoral Program Computational Mathematics, Altenbergerstra{\ss}e 69, A-4040 Linz, Austria (simon.hubmer@dk-compmath.jku.at), corresponding author.},
Ekaterina Sherina\footnote{Technical University of Denmark, Department of Applied Mathematics and Computer Science, Asmussens All\'e, 2800 Kongens Lyngby, Denmark (sershe@dtu.dk)},
Andreas Neubauer\footnote{Johannes Kepler University Linz, Industrial Mathematics Institute, Altenbergerstra{\ss}e 69,
A-4040 Linz, Austria (neubauer@indmath.uni-linz.ac.at)},
Otmar Scherzer\footnote{University of Vienna, Computational Science Center, Oskar Morgenstern-Platz 1, 1090 Vienna, Austria (otmar.scherzer@univie.ac.at)} \footnote{Johann Radon Institute Linz, Altenbergerstra{\ss}e 69, A-4040 Linz, Austria (otmar.scherzer@univie.ac.at)}
}
\begin{document}

\maketitle

\begin{abstract}
We consider a problem of quantitative static elastography, the estimation of the Lam\'e parameters from internal 
displacement field data. This problem is formulated as a nonlinear operator equation. To solve 
this equation, we investigate the Landweber iteration both analytically and numerically.
The main result of this paper is the verification of a nonlinearity condition in an \emph{infinite dimensional} 
Hilbert space context. This condition guarantees convergence of iterative regularization methods.
Furthermore, numerical examples for recovery of the Lam\'e parameters from displacement data 
simulating a static elastography experiment are presented.

\medskip
\noindent \textbf{Keywords:} Elastography, Inverse Problems, Nonlinearity Condition, 
Linearized Elasticity, Lam\'e Parameters, Parameter Identification, Landweber Iteration

\medskip
\noindent \textbf{AMS:} 65J22, 65J15, 74G75
\end{abstract}

\section{Introduction}
Elastography is a common technique for medical diagnosis. Elastography can be implemented based on any 
imaging technique by recording successive images and evaluating the displacement data (see  
\cite{Ophir_Cespedes_Ponnekanti_Yazdi_Li_1991, ODonnel_Skovoroda_Shapo_Emelianov_1994, Sarvazyan_Skovoroda_Emelianov_Fowlkes_Pipe_Adler_Buxton_Carson_1995, Lubinski_Emelianov_ODonnel_1999}, which are some early references 
on elastographic imaging based on ultrasound imaging). 
We differ between \emph{standard elastography}, which consists in displaying the displacement data, and 
\emph{quantitative elastography}, which consists in reconstructing elastic material parameters. 
Again we differ between two kinds of inverse problems related to quantitative elastography: 
The \emph{all in once approach} attempts to estimate the elastic material parameters from direct measurements of the underlying 
imaging system (typically recorded outside of the object of interest), 
while the \emph{two-step approach} consists in successive tomographic imaging, displacement computation and 
quantitative reconstruction of the elastic parameters from \emph{internal data}, which is computed from reconstructions of 
a tomographic imaging process. 
The fundamental difference between these approaches can be seen by a dimensionality analysis: Assuming that the 
material parameter is isotropic, it is a scalar locally varying parameter in three space dimensions. Therefore, three dimensional 
measurements of the imaging system should be sufficient to reconstruct the material parameter. On the other hand, the 
displacement data are a three-dimensional vector field, which requires ``three times as much information''. The second approach is 
more intuitive, but less data economic, since it builds up on the well-established reconstruction process taking into account 
the image formation process, 
and it can be implemented successfully if appropriate prior information can be used, such as smoothness assumptions or significant 
speckle for accurate tracking. In this paper we follow the second approach.

In this paper we assume that the model of \emph{linearized elasticity}, describing the relation between 
forces and displacements, is valid. Then, the inverse problem of \emph{quantitative elastography with internal measurements} 
consists in estimating the spatially varying \emph{Lam\'e parameters} $\lm$ from displacement field measurements $u$ induced by 
external forces.  

There exist a vast amount of mathematical literature on identifiability of the Lam\'e parameters, 
stability, and different reconstruction methods. See for example 
\cite{Bal_Bellis_Imperiale_2014,Bal_Uhlmann_2012,Bal_Uhlmann_2013,Barbone_Gokhale_2004,Barbone_Oberai_2007,Doyley_2012,Doyley_Meaney_Bamber_2000,Fehrenbach_Masmoudi_Souchon_2006,Gokhale_Barbone_Oberai_2008,Huang_Shih_1997,Jadamba_Khan_Raciti_2008,Ji_McLaughlin_Renzi_2003,McLaughlin_Renzi_2006,Oberai_Gokhale_Doyley_2004,Oberai_Gokhale_Feijoo_2003,Widlak_Scherzer_2015,Lechleiter_Schlasche_2017, Kirsch_Rieder_2016} and the references therein. Many of the above papers deal with the time-dependent equations of linearized elasticity, since the resulting inverse problem is arguably more stable because it uses more data. However, in many applications, including the ones we have in mind, no dynamic, i.e., time-dependent displacement field data, are available and hence one has to work with the static elasticity equations.

In this paper we consider the inverse problem of identifying the Lam\'e parameters from \emph{static} 
displacement field measurements $u$. We reformulate this problem as a nonlinear operator equation 
	\begin{equation}\label{prob_main}
 		F(\lm) = u \,,
	\end{equation}
in an \emph{infinite dimensional} Hilbert space setting, 
which enables us to solve this equation by gradient based algorithms. In particular, we 
are studying the convergence of the Landweber iteration, which can be considered a gradient descent algorithm (without line search)
in an infinite dimensional function space setting, and reads as follows:
	\begin{equation}\label{eq:LW}
 		(\lambda^{(k+1)},\mu^{(k+1)}) = (\lambda^{(k)},\mu^{(k)}) - (F'(\lambda^{(k)},\mu^{(k)}))^* (F(\lambda^{(k)},\mu^{(k)})-u^\delta)\,,
	\end{equation}
where $k$ is the iteration index of the Landweber iteration. The iteration is terminated when for the first time $\|F(\lambda^{(k)},\mu^{(k)})-u^\delta\| < \tau \delta$, where $\tau > 1$ is a constant and $\delta$ is an estimate for the amount of noise in the data $u^\delta \approx u$.
Denoting the termination index by $k_*:=k_*(\delta)$, and assuming a nonlinearity condition on $F$ to hold, guarantees that 
$(\lambda^{(k_*-1)},\mu^{(k_*-1)})$ approximates the desired solution of \eqref{prob_main} (that is, it is convergent in the 
case of noise free data), and for $\delta \to 0$, $(\lambda^{(k_*(\delta)-1)},\mu^{(k_*(\delta)-1)})$ is continuously depending 
on $\delta$ (that is, the method is stable \cite{Kaltenbacher_Neubauer_Scherzer_2008}). The main ingredient in the 
analysis is a non-standard nonlinearity condition, called the \emph{tangential cone condition}, in an \emph{infinite dimensional} 
functional space setting, which is verified in Section \ref{sec:nc}. The \emph{tangential cone condition} has been subject 
to several studies for particular examples of inverse problems (see for instance \cite{Kaltenbacher_Neubauer_Scherzer_2008}). 
In infinite dimensional function space settings it has only been verified for very simple test case, while after discretization 
it can be considered a consequence of the inverse function theorem. This condition has been verified for instance for the 
\emph{discretized} electrical impedance tomography problem \cite{Lechleiter_Rieder_2008}. 
The motivation for studying the Landweber iteration in an infinite dimensional setting is that the 
convergence is discretization independent, and when actually discretized for numerical purposes, no additional 
discretization artifacts appear. That means that the outcome of the iterative algorithm after stopping by a 
discrepancy principle is approximating the desired solution of \eqref{prob_main} and is also stable with respect 
to data perturbations in an \emph{infinite dimensional setting}. 
However, stability estimates, such as \cite{Lai_2014}, cannot be derived from this condition alone, but follow 
if source conditions, like \eqref{sourcecond_general}, are satisfied (see \cite{Scherzer_2001}). For \emph{dynamic} measurement data of the displacement field $u$, related investigation have been performed in 
\cite{Lechleiter_Schlasche_2017,Kirsch_Rieder_2016}.

The outline of this paper is as follows: First, we recall the equations of linear elasticity, describing the 
forward model (Section \ref{sec:MLE}). Then, we calculate the Fr\`echet derivative and its adjoint (Sections \ref{sec:frechet} and 
\ref{sec:frechet_adjoint}), which are needed to implement the Landweber iteration.
The main result of this paper is the verification of the \emph{(strong) nonlinearity condition} (Section  \ref{sec:nc}) from 
\cite{Hanke_Neubauer_Scherzer_1995} 
in an \emph{infinite dimensional} setting, which is the basic assumption guaranteeing convergence of iterative regularization methods. 
Therefore, together with the general convergence rates results from \cite{Hanke_Neubauer_Scherzer_1995} our paper provides the first 
successful convergence analysis (guaranteeing convergence to a minimum energy solution) of an iterative method for quantitative elastography in a function space setting.
Finally, we present some sample reconstructions with iterative regularization methods from numerically simulated displacement 
field data (Section \ref{sec:source}).

\section{Mathematical Model of Linearized Elasticity}
\label{sec:MLE}

In this section we introduce the basic notation and recall the basic equation of linearized elasticity:
\begin{notation}
 $\Omega$ denotes a non-empty bounded, open and connected set in $\mathbb{R}^N$, $N=1,2,3$, with a Lipschitz continuous 
boundary $\bO$, which has two subsets $\GD$ and $\GT$, satisfying
$\bO = \overline{\GD \cup \GT}$, $\GD \cap \GT = \emptyset$ and $\meas{\GD} > 0$.
\end{notation}
\begin{definition}
Given body forces $f$, displacement $g_D$, surface traction $g_T$ and Lam\'e parameters $\lambda$ and $\mu$, 
the forward problem of linearized elasticity with displacement-traction boundary conditions consists in finding 
$\ut$ satisfying
  \begin{equation} \label{prob_forward_non_hom}
  \begin{split}
      -\div{\sigma(\ut)} & = f \,, \quad \text{in} \; \Omega \,, \\
      \ut \,|_{\GD} & = g_D \,, \\
      \sigma(\ut) \n \,|_{\GT} & = g_T \,,
  \end{split}
  \end{equation}
where $\n$ is an outward unit normal vector of $\bO$ and the stress tensor $\sigma$ defining the stress-strain relation in $\Omega$ is defined by
	\begin{equation}\label{def_E_sigma}
		\sigma(u)  := \lambda \; \div{u} I + 2 \mu \; \mE{u} \,, \qquad 
		\mE{u} :=  \frac{1}{2}\left(\nabla u + \nabla u ^T \right) \,,
	\end{equation}
where $I$ is the identity matrix and $\E$ is called the strain tensor.
\end{definition}

It is convenient to homogenize problem \eqref{prob_forward_non_hom} in the following way: Taking a $\Phi$ such that $\Phi \vert_\GD = g_D$, one then seeks $u := \ut - \Phi$ such that 
  \begin{equation} \label{prob_forward_hom}
  \begin{split}
      -\div{\sigma(u)} & = f + \div{\sigma(\Phi)}, \quad \text{in} \; \Omega \,, \\
      u \,|_{\GD} & = 0 \,, \\
      \sigma(u) \n \, |_{\GT} & = g_T - \sigma(\Phi) \n \, \vert_\GT \,.
  \end{split}
  \end{equation}
Throughout this paper, we make the following
\begin{assumption}\label{ass_main}
Let $f \in \HmoO^N$, $\gD \in \HohbD^N$, and $\gT \in \HmohObT^N$. Furthermore, let $\Phi \in \HoO^N$ be such that $\Phi \vert_\GD = \gD$.
\end{assumption}  

Since we want to consider weak solutions of \eqref{prob_forward_hom}, we make the following

\begin{definition}
Let Assumption~\ref{ass_main} hold. We define the space
\begin{equation*} 
		V := \HoS^N \,, \qquad \text{where} \qquad \HoS : = \{ u \in H^1(\Omega) \vl u |_\GD  = 0 \} \,,
	\end{equation*}
the linear form
	\begin{equation}\label{def_l_lin}
		l(v) := \sprf{f,v} + \sprgT{\gT,v} \,,
	\end{equation}
and the bilinear form
	\begin{equation}\label{def_a_bilin}
		\alm(u,v) := \intV{\left(\lambda \,\div{u}\div{v} + 2\mu \,\mE{u}:\mE{v}\right) } \,,
	\end{equation}
where the expression $\mE{u}:\mE{v}$ denotes the Frobenius product of the matrices $\mE{u}$ and $\mE{v}$, which also induces the Frobenius norm $\norm{\mE{u}}_F := \sqrt{\mE{u}:\mE{u}}$.
\end{definition}

Note that both $\alm(u,v)$ and $l(v)$ are also well defined for $u,v \in \HoO^N$.

\begin{definition}\label{def_weak_sol}
A function $u \in V$ satisfying the variational problem
	\begin{equation}\label{prob_main_weak}
		\alm(u,v) =  l(v) - \alm(\Phi,v)\,, \qquad \forall \, v \in V \,,
	\end{equation}
is called a weak solution of the linearized elasticity problem \eqref{prob_forward_hom}.
\end{definition}

From now on, we only consider weak solutions of \eqref{prob_forward_hom} in the sense of Definition~\ref{def_weak_sol}.
	
\begin{definition}
The set $\Mmb$ of admissible Lam\'e parameters is defined by
	\begin{equation*}
		\Mmb := \left\{ (\lm) \in \LiO^2 \, \vert \, \exists  \, 0 < \eps \leq  \frac{\mb \, \cK^2}{N+2\cK^2} \, : \, \lambda \geq - \eps \,, \, \mu \geq \mb - \eps > 0  \right\} \,.
	\end{equation*}
\end{definition}

Concerning existence and uniqueness of weak solutions, by standard arguments of elliptic differential equations we get the following

\begin{theorem}\label{thm_weak_sol}
Let the Assumption~\ref{ass_main} hold and assume that the Lam\'e parameters $(\lm) \in \Mmb$ for some $\mb > 0$. Then there exists a unique weak solution $u \in V$ of \eqref{prob_forward_hom}. Moreover, there exists a constant $\cLM> 0$ such that
	\begin{equation*}
	\begin{split}
		\norm{u}_\HoO \leq \cLM \Big(\norm{f}_\HmoO  + \cT \norm{\gT}_\HmohObT
		+ \left(N \norm{\lambda}_\LiO + 2 \norm{\mu}_\LiO\right) \norm{\Phi}_\HoO \Big) \,,
	\end{split}
	\end{equation*}
where $\cT$ denotes the constant of the trace inequality \eqref{ineq_trace}.
\end{theorem}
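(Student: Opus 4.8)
The statement will follow from the Lax--Milgram lemma applied to the variational problem \eqref{prob_main_weak}, so the plan is to verify its hypotheses for the bilinear form $\alm$ and for the right-hand side functional on the Hilbert space $V$, and then to read off the asserted bound from the quantitative form of the lemma. Boundedness of $\alm$ on $V\times V$ comes first and is routine: applying the Cauchy--Schwarz inequality in $\LtO$ together with the pointwise estimates $\abs{\div{u}}\le\sqrt{N}\,\abs{\nabla u}$ and $\norm{\mE{u}}_F\le\abs{\nabla u}$, I would obtain
\begin{equation*}
  \abs{\alm(u,v)}\le\kl{N\norm{\lambda}_\LiO+2\norm{\mu}_\LiO}\norm{u}_\HoO\norm{v}_\HoO\,,
\end{equation*}
so that the constants appearing here are exactly the ones that resurface in the final estimate.

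The essential step is coercivity of $\alm$ on $V$, and this is where I expect the only real work to lie. Since $(\lm)\in\Mmb$, there is an $\eps>0$ with $\lambda\ge-\eps$ and $\mu\ge\mb-\eps$, whence
\begin{equation*}
  \alm(u,u)\ge-\eps\intV{\kl{\div{u}}^2}+2(\mb-\eps)\intV{\norm{\mE{u}}_F^2}\,.
\end{equation*}
The indefinite first term I would control from below using $\intV{\kl{\div{u}}^2}\le N\norm{u}_\HoO^2$, and the positive second term from below using Korn's inequality $\intV{\norm{\mE{u}}_F^2}\ge\cK^2\norm{u}_\HoO^2$, which is available precisely because functions in $V$ vanish on $\GD$ with $\meas{\GD}>0$, so that no nontrivial rigid-body motions survive. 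This gives
\begin{equation*}
  \alm(u,u)\ge\kl{2\mb\,\cK^2-(N+2\cK^2)\eps}\norm{u}_\HoO^2\,.
\end{equation*}
The admissibility constraint $\eps\le\mb\,\cK^2/(N+2\cK^2)$ built into the definition of $\Mmb$ is exactly what forces the bracket to be bounded below by the positive number $\mb\,\cK^2$; this interplay between the sign-indefiniteness of $\lambda$ and the threshold defining $\Mmb$ is the heart of the argument, while everything else is bookkeeping.

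It remains to treat the right-hand side $v\mapsto l(v)-\alm(\Phi,v)$ as an element of $V^*$. The plan is to estimate $\abs{l(v)}\le\kl{\norm{f}_\HmoO+\cT\norm{\gT}_\HmohObT}\norm{v}_\HoO$, where the trace inequality with constant $\cT$ converts the boundary pairing on $\GT$ into the $\HoO$-norm, and to bound $\abs{\alm(\Phi,v)}\le\kl{N\norm{\lambda}_\LiO+2\norm{\mu}_\LiO}\norm{\Phi}_\HoO\norm{v}_\HoO$ exactly as in the boundedness step (here $\Phi\in\HoO^N$, so $\alm(\Phi,\cdot)$ is well defined by the remark following \eqref{def_a_bilin}). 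Adding these contributions bounds the $V^*$-norm of the functional, and dividing by the coercivity constant $\mb\,\cK^2$ the Lax--Milgram lemma then yields existence, uniqueness, and the stated estimate with $\cLM:=1/(\mb\,\cK^2)$.
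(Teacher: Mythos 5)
Your overall strategy --- Lax--Milgram for \eqref{prob_main_weak}, with coercivity extracted from the threshold on $\eps$ built into the definition of $\Mmb$ --- is exactly the standard argument that the paper delegates to the literature (Valent), and your boundedness estimate as well as your treatment of the right-hand side $v\mapsto l(v)-\alm(\Phi,v)$, including the trace inequality for the $\gT$-term, are correct. But there is one genuine flaw in the coercivity step: you invoke Korn's inequality in the form $\intV{\norm{\mE{u}}_F^2}\ge\cK^2\norm{u}_\HoO^2$, whereas the paper's \eqref{ineq_Korn} only gives $\intV{\norm{\mE{u}}_F^2}\ge\cK^2\norm{\grad u}_\LtO^2$, i.e.\ control of the gradient seminorm, not of the full $\HoO$-norm. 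The full-norm version with the \emph{same} constant $\cK$ does not follow from \eqref{ineq_Korn} (for the optimal $\cK$ it is false, since $\norm{u}_\HoO^2>\norm{\grad u}_\LtO^2$ for $u\neq 0$), and the admissibility threshold $\eps\le\mb\,\cK^2/(N+2\cK^2)$ in $\Mmb$ is calibrated to the gradient-norm constant of \eqref{ineq_Korn}. The missing ingredient is Friedrich's inequality.

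The repair is short and recovers precisely the paper's constant. Keep everything in the seminorm: $\intV{\kl{\div{u}}^2}\le N\norm{\grad u}_\LtO^2$ together with \eqref{ineq_Korn} gives
\begin{equation*}
	\alm(u,u)\ge\kl{2\mb\,\cK^2-(N+2\cK^2)\,\eps}\norm{\grad u}_\LtO^2\ge\mb\,\cK^2\,\norm{\grad u}_\LtO^2\,,
\end{equation*}
which is exactly your bracket computation, only with $\norm{\grad u}_\LtO^2$ in place of $\norm{u}_\HoO^2$; then convert to the full norm via \eqref{ineq_Friedrich_2}, $\norm{u}_\HoO^2\le(1+\cF^2)\norm{\grad u}_\LtO^2$, so the coercivity constant of $\alm$ on $V$ is $\mb\,\cK^2/(1+\cF^2)$ rather than $\mb\,\cK^2$. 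Feeding this into the quantitative Lax--Milgram bound yields $\cLM=(1+\cF^2)/(\mb\,\cK^2)$ --- the constant the paper records in its proof --- instead of your $1/(\mb\,\cK^2)$. Since the theorem only asserts the existence of some $\cLM>0$, the statement itself is unharmed, but as written your proof rests on an inequality (Korn with the full $\HoO$-norm and constant $\cK$) that is not available, and your claimed value of $\cLM$ is off by the factor $1+\cF^2$.
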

\begin{proof}
This standard result can for example be found in \cite{Valent_2013}. For the constant $\cLM$ one gets $\cLM = (1+\cF^2)/(\mb \, \cK^2)$, where $\cF$ and $\cK$ are the constants of Friedrich's inequality \eqref{ineq_Friedrich} and Korn's inequality \eqref{ineq_Korn}, respectively.
\end{proof}

\section{The Inverse Problem}

After considering the forward problem of linearized elasticity, we now turn to the inverse problem, 
which is to estimate the Lam\'e parameters $\lm$ by measurements of the displacement field $u$. 
More precisely, we are facing the following inverse problem of quantitative elastography:
\begin{problem}\label{prob_main_informal}
	Let Assumption~\ref{ass_main} hold and let $\ud \in \LtO^N$ be a measurement of the true displacement field $u$ satisfying
	\begin{equation}\label{ass_noise}
		\norm{u - \ud}_\LtO \leq \delta \,,
	\end{equation}
	where $\delta \geq 0$ is the noise level. Given the model of linearized elasticity \eqref{prob_forward_non_hom} in the weak form \eqref{prob_main_weak}, the problem is to find the Lam\'e parameters $\lm$.
\end{problem}

The problem of linearized elastography can be formulated as the solution of the operator equation \eqref{prob_main} with the operator
   \begin{equation}\label{def_F}
	\begin{split}
      F: \D(F) := \left\{ (\lambda, \mu ) \in \LiO^2  \vl \lambda \geq 0 \,, \,\mu \geq \mb > 0 \right\} & \to \LtO^N \,,
      \\
      (\lambda,\mu) & \mapsto u(\lambda,\mu) \,,
	\end{split}
  \end{equation}
where $u(\lambda,\mu)$ is the solution of \eqref{prob_main_weak} and hence, we can apply all results from classical inverse problems theory \cite{Engl_Hanke_Neubauer_1996}, given that 
the necessary requirements on $F$ hold. For showing them, it is necessary to write $F$ in a different way: We define the space
	\begin{equation}\label{def_dual}
		\Vs := \left(\HoS^N\right)^* \,,
	\end{equation} 
which is the dual space of $V = \HoS^N$. Next, we introduce the operator $\Atlm$ connected to the bilinear form $\alm$, defined by
	\begin{equation}\label{def_At}
	\begin{split}
		\Atlm : \, \HoO^N & \to \Vs \,,
		\\
		\vt &\mapsto \left( v \mapsto \alm(\vt,v) \right) \,,
	\end{split}
	\end{equation}
and its restriction to $V$, i.e., $A := \At \vert_V$, namely
	\begin{equation}\label{def_A}
	\begin{split}
		\Alm : \, V & \to \Vs \,,
		\\
		v &\mapsto \left( \vb \mapsto \alm(v,\vb) \right) \,.
	\end{split}
	\end{equation}
Furthermore, for $v \in V$ and $\vs \in \Vs$, we define the canonical dual
	\begin{equation*}
		\sprD{\vs,v} = \spr{v,\vs}_{V,\Vs} := \vs(v) \,.
	\end{equation*}
Next, we collect some important properties of $\Atlm$ and $\Alm$. For ease of notation,
	\begin{equation}
		\norm{(\lmb)-(\lm)}_\infty :=  N \norm{\lb - \lambda}_\LiO + 2 \norm{\mub-\mu}_\LiO \,.
	\end{equation}

\begin{proposition}\label{prop_A_general}
The operators $\Atlm$ and $\Alm$ defined by \eqref{def_At} and \eqref{def_A}, respectively, are bounded and linear for all $\lm \in \LiO$. In particular, for all $\lm,\lmb \in \LiO$ 
	\begin{equation}\label{eq_A_bounded}
		\normDDs{\Almb- \Alm} \leq \norm{\Atlmb-\Atlm}_{\HoO,V^*} \leq \norm{(\lmb)-(\lm)}_\infty \,. 
	\end{equation}
Furthermore, for all $(\lm) \in \Mmb$ with $\mb > 0$, the operator $\Alm$ is bijective and has a continuous inverse $\Alm^{-1} : \Vs \to V$ satisfying $\normDsD{\Alm^{-1}} \leq \cLM$, where $\cLM$ is the constant of Theorem~\ref{thm_weak_sol}. In particular, for all $\vs, \vsb \in \Vs$ and $(\lm),(\lmb) \in \Mmb$
	\begin{equation}\label{eq_A_inv_ineq}
	\begin{split}
		\norm{\Almb^{-1} \vsb - \Alm^{-1} \vs}_V \leq
		\cLM \left(\norm{(\lmb)-(\lm)}_\infty\norm{\Alm^{-1} \vs}_V + \norm{\vsb - \vs}_\Vs\right) \,.
	\end{split}
	\end{equation}
\end{proposition}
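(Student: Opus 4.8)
The plan is to establish the three assertions in turn, each feeding into the next.

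First I would treat linearity and the operator bound. Linearity of $\Atlm$ and $\Alm$ is immediate, since $\alm(\vt,v)$ is linear in its first argument by \eqref{def_a_bilin}. For the quantitative estimate, note that for $\vt \in \HoO^N$ and $v \in V$,
\begin{equation*}
\left((\Atlmb - \Atlm)\vt\right)(v) = \intV{\left((\lb-\lambda)\div{\vt}\div{v} + 2(\mub-\mu)\mE{\vt}:\mE{v}\right)} \,.
\end{equation*}
Applying H\"older's inequality in $\LiO$, $\LtO$, $\LtO$ and then the pointwise bounds $\norm{\div{w}}_\LtO \leq \sqrt{N}\norm{w}_\HoO$ (from $\div{w} = \operatorname{tr}\mE{w}$ and $(\operatorname{tr} M)^2 \leq N\norm{M}_F^2$) together with $\norm{\mE{w}}_\LtO \leq \norm{w}_\HoO$ yields
\begin{equation*}
\left|\left((\Atlmb - \Atlm)\vt\right)(v)\right| \leq \left(N\norm{\lb-\lambda}_\LiO + 2\norm{\mub-\mu}_\LiO\right)\norm{\vt}_\HoO\norm{v}_\HoO \,,
\end{equation*}
which is exactly the claimed bound after taking the supremum over $\norm{v}_V \leq 1$; the factors $N$ and $2$ here are precisely those in $\norm{\cdot}_\infty$. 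Boundedness of $\Atlm$ itself then follows by taking the subtracted parameters to be zero, since $\alm$ vanishes when $\lambda=\mu=0$, and the first inequality in \eqref{eq_A_bounded} holds because $\Alm$ is the restriction of $\Atlm$ to $V$, so its norm can only decrease.

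Next I would prove bijectivity and the inverse bound via the Lax--Milgram lemma. Boundedness of $\alm$ on $V\times V$ is the estimate just obtained. For coercivity I would write $\alm(v,v) = \intV{(\lambda(\div{v})^2 + 2\mu\norm{\mE{v}}_F^2)}$ and use $\lambda \geq -\eps$, $\mu \geq \mb - \eps$ together with $(\div{v})^2 \leq N\norm{\mE{v}}_F^2$ to obtain $\alm(v,v) \geq (2\mb - (N+2)\eps)\norm{\mE{v}}_\LtO^2$; Korn's inequality then converts the right-hand side into a positive multiple of $\norm{v}_\HoO^2$, the admissibility constraint $\eps \leq \mb\cK^2/(N+2\cK^2)$ being exactly what guarantees a strictly positive coercivity constant. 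Lax--Milgram then yields bijectivity of $\Alm:V\to\Vs$, and the accompanying a priori estimate is precisely the solution bound $\norm{\Alm^{-1}\vs}_V \leq \cLM\norm{\vs}_\Vs$ of Theorem~\ref{thm_weak_sol}, i.e. $\normDsD{\Alm^{-1}} \leq \cLM$.

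Finally I would derive \eqref{eq_A_inv_ineq} by a standard perturbation identity. Setting $w := \Alm^{-1}\vs$ and $\wb := \Almb^{-1}\vsb$, so that $\Alm w = \vs$ and $\Almb \wb = \vsb$, I would write $\Almb(\wb - w) = \vsb - \Almb w = (\vsb - \vs) - (\Almb - \Alm)w$, hence
\begin{equation*}
\wb - w = \Almb^{-1}\left((\vsb - \vs) - (\Almb - \Alm)w\right) \,.
\end{equation*}
Taking the $V$-norm and applying $\normDsD{\Almb^{-1}} \leq \cLM$ from the second step together with $\normDDs{\Almb - \Alm} \leq \norm{(\lmb)-(\lm)}_\infty$ from the first step gives exactly \eqref{eq_A_inv_ineq}.

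I expect the only genuine obstacle to be the coercivity argument of the second step: one must check that combining the sign constraints encoded in $\Mmb$ with Korn's inequality really leaves a strictly positive constant, and that the resulting solution-operator bound matches the constant $\cLM$ of Theorem~\ref{thm_weak_sol}. The other two steps reduce to H\"older/Cauchy--Schwarz estimates and an elementary operator manipulation.
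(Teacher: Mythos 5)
Your proposal is correct and takes essentially the same route as the paper's proof: the H\"older-type estimate for \eqref{eq_A_bounded}, Lax--Milgram for bijectivity and the bound $\normDsD{\Alm^{-1}} \leq \cLM$ (the paper delegates the coercivity computation to Theorem~\ref{thm_weak_sol} and \cite{Valent_2013}, which you instead carry out explicitly), and the identical perturbation identity for \eqref{eq_A_inv_ineq}. The coercivity check you flag as the only obstacle does close: since $\cK \leq 1$, the admissibility bound $\eps \leq \mb\cK^2/(N+2\cK^2)$ implies $\eps \leq \mb/(N+2)$, hence $2\mb - (N+2)\eps \geq \mb$, so your coercivity constant is at least $\mb\cK^2/(1+\cF^2)$, consistent with $\cLM = (1+\cF^2)/(\mb\,\cK^2)$.
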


\begin{proof}
The boundedness and linearity of $\Alm$ and $\Atlm$ for all $\lm \in \LiO$ are immediate consequences of the boundedness 
and bilinearity of $\alm$ and we have 
	\begin{equation*}
	\begin{split}
		&\normHDs{\Atlm - \Atlmb} 
		= 
		\normHDs{\Atlmbd}
		=
		\sup_{u \in \HoO, u \neq 0} \frac{\norm{\Atlmbd u}_\Vs}{\norm{u}_\HoO} 
		\\
		& \quad	= 
		\sup_{u \in \HoO, u \neq 0} \frac{\sup_{v \in V, v \neq 0} \abs{\almbd(u,v)}}{\norm{u}_\HoO \norm{v}_V} 
		\leq
		\norm{(\lmb)-(\lm)}_\infty \,,
	\end{split}
	\end{equation*}
which also translates to $\Alm$, since $V \subset \HoO^N$. Moreover, due to the Lax-Milgram Lemma and Theorem~\ref{thm_weak_sol}, $\Alm$ is bijective for $(\lm) \in \Mmb$ with $\mb > 0$ and therefore, by the Open Mapping Theorem, $\Alm^{-1}$ exists and is linear and continuous. Again by the Lax-Milgram Lemma, there follows $\normDsD{\Alm^{-1}} \leq \cLM$.

Let $\vs, \vsb \in \Vs$ and $(\lm), (\lmb) \in \Mmb$ with $\mb > 0$ be arbitrary but fixed and consider $u := \Alm^{-1} \vs$ and $\ub := \Almb^{-1} \vsb$. Subtracting those two equations, we get
	\begin{equation*}
		\Alm u - \Almb \ub = \vs - \vsb \,,
	\end{equation*}
which, by the definition of $\Alm$ and $\alm$, can be written as
	\begin{equation*}
		 \Almb \left( u - \ub \right)  = \Almbd u + \vs - \vsb \,.
	\end{equation*}
and is equivalent to the variational problem
	\begin{equation}\label{prob_var_continuity}
	\begin{split}
		 \almb\left( \left( u - \ub \right),v\right)  
		 = \almbd (u,v) + \sprD{\vs - \vsb,v} \,, \qquad
		 \forall \, v \in V \,.
	\end{split}
	\end{equation}
Now since $\alm$ is bounded, the right hand side of \eqref{prob_var_continuity} is bounded by
	\begin{equation*}
		\kl{\norm{(\lmb)-(\lm)}_\infty \norm{u}_\HoO   + \norm{\vs - \vsb}_\Vs}\norm{v}_V \,.
	\end{equation*}
Hence, due to the Lax-Milgram Lemma the solution of \eqref{prob_var_continuity} is unique and depends continuously on the right hand side, which immediately yields the assertion.
\end{proof}

Using $\Alm$ and $\Atlm$, the operator $F$ can be written in the alternative form
	\begin{equation}\label{def_F_A}
		F(\lm) = \Alm^{-1} \left( l - \Atlm \Phi \right) \,,
	\end{equation}
with $l$ defined by \eqref{def_l_lin}. Now since, due to \eqref{eq_A_bounded},
	\begin{equation*}
	\begin{split}
		\norm{\left(l - \Atlm \Phi\right) - \left(l - \Atlmb \Phi\right)}_\Vs  =
		\norm{\Atlmbd \Phi}_\Vs 
		\leq
		\norm{(\lmb)-(\lm)}_\infty \norm{\Phi}_\HoO \,,
	\end{split}
	\end{equation*} 
inequality \eqref{eq_A_inv_ineq} implies 
	\begin{equation}\label{ineq_u_cont}
	\begin{split}
		\norm{F(\lmb) - F(\lm)}_V \leq
		\cLM \norm{(\lmb)-(\lm)}_\infty\left(  \norm{F(\lm)}_\HoO   +  \norm{\Phi}_\HoO \right) \,,
	\end{split}
	\end{equation}
showing that $F$ is a continuous operator. 

\begin{remark}
Note that $F$ can also be considered as an operator from $\Mmb$ to $\LtO^N$, in which case Theorem~\ref{thm_weak_sol} and Proposition~\ref{prop_A_general} guarantee that it remains well-defined and continuous, which we use later on. 
\end{remark}

\subsection{Calculation of the Fr\'echet Derivative}\label{sec:frechet}

In this section, we compute the Fr\'echet derivative $F'(\lm)(\hh)$ of $F$ using the representation \eqref{def_F_A}. 
\begin{theorem}\label{thm_F_D}
The operator $F$ defined by \eqref{def_F_A} and considered as an operator from $\Mmb \to \LtO^N$ for some $\mb > 0$ is Fr\'echet differentiable for all $(\lm) \in \D(F)$ with
	\begin{equation}\label{def_F_D}
		F'(\lm)(\hh) = -\Alm^{-1}\left(\Ahh \ulm + \Athh \Phi \right) \,.
	\end{equation}
\end{theorem}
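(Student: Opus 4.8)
The plan is to differentiate the representation \eqref{def_F_A} directly, exploiting that the bilinear form $\alm$—and hence both $\Alm$ and $\Atlm$—depends \emph{linearly} on the parameter pair $(\lm)$. Concretely, since $\alm(u,v)$ is linear in $(\lm)$, splitting the perturbed parameter gives $\Almh = \Alm + \Ahh$ as operators on $V$ and $\tilde{A}_{\lmh} = \Atlm + \Athh$ on $\HoO^N$, with $(\hh)\mapsto\Ahh$ and $(\hh)\mapsto\Athh$ themselves linear. Abbreviating $b := l - \Atlm\Phi$, so that $\ulm = \Alm^{-1}b = F(\lm)$, and $b_h := -\Athh\Phi$, the perturbed value reads $F(\lmh) = \Almh^{-1}(b + b_h)$, while \eqref{def_F_D} rewrites as $F'(\lm)(\hh) = \Alm^{-1}b_h - \Alm^{-1}\Ahh\ulm$.

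Before estimating, I would record a domain observation: since $(\lm)\in\D(F)$ satisfies $\lambda\geq 0$ and $\mu\geq\mb$, any perturbation with $\norm{(\hh)}_\infty$ small enough keeps $(\lmh)$ inside $\Mmb$ (this is precisely what the $\eps$-slack in the definition of $\Mmb$ buys us). Hence Proposition~\ref{prop_A_general} applies to the perturbed parameters and yields the \emph{uniform} bounds $\normDsD{\Almh^{-1}}, \normDsD{\Alm^{-1}} \leq \cLM$. Next I would invoke the second resolvent identity $\Almh^{-1} - \Alm^{-1} = -\Almh^{-1}\Ahh\Alm^{-1}$, immediate from $\Almh = \Alm + \Ahh$. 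Splitting
$$F(\lmh) - F(\lm) = \Almh^{-1}b_h + (\Almh^{-1} - \Alm^{-1})b = \Almh^{-1}b_h - \Almh^{-1}\Ahh\ulm \,,$$
and subtracting $F'(\lm)(\hh)$, the remainder collapses to
$$R := F(\lmh) - F(\lm) - F'(\lm)(\hh) = (\Almh^{-1} - \Alm^{-1})(b_h - \Ahh\ulm) = -\Almh^{-1}\Ahh\Alm^{-1}(b_h - \Ahh\ulm) \,.$$

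Finally I would estimate $R$ in the $V$-norm, tracking the spaces $\Alm^{-1}(b_h - \Ahh\ulm)\in V$, then $\Ahh(\cdot)\in\Vs$, then $\Almh^{-1}(\cdot)\in V$. Using $\normDsD{\Almh^{-1}},\normDsD{\Alm^{-1}}\leq\cLM$, the bound $\normDDs{\Ahh}\leq\norm{(\hh)}_\infty$ from \eqref{eq_A_bounded}, and $\norm{b_h - \Ahh\ulm}_\Vs \leq \norm{(\hh)}_\infty(\norm{\Phi}_\HoO + \norm{\ulm}_\HoO)$, one obtains $\norm{R}_V \leq \cLM^2 \norm{(\hh)}_\infty^2(\norm{\Phi}_\HoO + \norm{\ulm}_\HoO) = \LandauO(\norm{(\hh)}_\infty^2)$, which is $o(\norm{(\hh)}_\infty)$. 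Since $(\hh)\mapsto F'(\lm)(\hh)$ is visibly linear and bounded (the same estimates give $\norm{F'(\lm)(\hh)}_V \leq \cLM\norm{(\hh)}_\infty(\norm{\ulm}_\HoO + \norm{\Phi}_\HoO)$), this identifies $F'(\lm)$ as the Fr\'echet derivative; the continuous embedding $V\hookrightarrow\LtO^N$ then transfers the statement to the $\LtO^N$-valued setting.

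The algebra here is routine; the genuine content, and the step I would treat most carefully, is the bookkeeping of the three spaces $V$, $\HoO^N$, $\Vs$ so that each composition $\Almh^{-1}\Ahh\Alm^{-1}(\cdot)$ is well defined, together with the \emph{uniform} invertibility of $\Almh$ for small perturbations, which is exactly what the enlarged admissible set $\Mmb$ guarantees. Everything else reduces to the resolvent identity and the operator estimates already established in Proposition~\ref{prop_A_general}.
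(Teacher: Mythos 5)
Your proof is correct, and it takes a genuinely different route from the paper's. You exploit the affine dependence $A_{\lmh} = \Alm + \Ahh$ together with the second resolvent identity to write the remainder \emph{exactly} as $R = \kl{A_{\lmh}^{-1} - \Alm^{-1}}\kl{b_h - \Ahh\ulm} = -A_{\lmh}^{-1}\Ahh\Alm^{-1}\kl{b_h - \Ahh\ulm}$, which, combined with the uniform bound $\normDsD{A_{\lmh}^{-1}} \leq \cLM$ from Proposition~\ref{prop_A_general}, yields the explicit quadratic estimate $\norm{R}_V = \LandauO\kl{\norm{(\hh)}_\infty^2}$ and hence Fr\'echet differentiability in one stroke. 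The paper instead proves Gate\^{a}ux differentiability: it forms the difference quotient $\fot\kl{F(\lmth)-F(\lm)} - \Glm(\hh)$, applies $\Alm$ to it, reduces it algebraically to $-\Ahh\kl{\ulmth - \ulm}$, and lets this tend to zero as $t \to 0$ via the Lipschitz-type continuity estimate \eqref{ineq_u_cont}; it then upgrades Gate\^{a}ux to Fr\'echet using the continuity of $(\lm) \mapsto \Glm$ in the operator norm. Both arguments rest on the same two pillars, namely Proposition~\ref{prop_A_general} and the observation (which you and the paper both make) that $(\lmh)$, respectively $(\lmth)$, stays in $\Mmb$ for small perturbations even when it leaves $\D(F)$, so that $F$ remains well defined along the perturbed parameters. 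What your route buys is a self-contained, quantitative second-order remainder bound, which is stronger than bare Fr\'echet differentiability and is precisely the kind of estimate that feeds into nonlinearity conditions and Newton-type analyses; what the paper's route buys is brevity, since it recycles the already-established estimate \eqref{ineq_u_cont} and the standard principle that a continuous Gate\^{a}ux derivative is a Fr\'echet derivative, instead of redoing the operator algebra.
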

\begin{proof}
We start by defining
	\begin{equation*}
		\Glm(\hh) := -\Alm^{-1}\left(\Ahh \ulm + \Athh \Phi \right) \,.
	\end{equation*}
Due to Proposition~\ref{prop_A_general}, $\Glm$ is a well-defined, bounded linear operator which depends continuously on 
$(\lm) \in D(F)$ with respect to the operator-norm. Hence, if we can prove that $\Glm$ is the Gate\^{a}ux derivative of $F$ it is also the Fr\'echet derivative of $F$. For this, we look at 
	\begin{equation}\label{eq_DF_helper_3}
	\begin{split}
		& \frac{F(\lmth)- F(\lm)}{t} - \Glm(\hh) 
		\\
		& \quad
		=
		\fot \left(\Almth^{-1}(l - \Atlmth \Phi) - \Alm^{-1} (l - \Atlm \Phi)\right) 
		\\
		& \qquad
		+  \Alm^{-1}\left(\Ahh \ulm + \Athh \Phi \right) \,.
	\end{split}
	\end{equation}
Note that it can happen that $(\lmth) \notin \D(F)$. However, choosing $t$ small enough, one can always guarantee that $(\lmth) \in \Mmb$, in which case $F(\lmth)$ remains well-defined as noted above. Applying $\Alm$ to \eqref{eq_DF_helper_3} we get
	\begin{equation*}
	\begin{split}
		& \Alm \kl{\frac{F(\lmth)- F(\lm)}{t} -  \Glm(\hh)} 
		\\
		& 
		=
		\fot \left(\Alm \Almth^{-1}(l - \Atlmth \Phi) - (l - \Atlm \Phi)\right) 
		 + 
		\left(\Ahh \ulm + \Athh \Phi \right) \,,	
	\end{split}
	\end{equation*}
which, together with
	\begin{equation*}
	\begin{split}
		& \Alm \Almth^{-1}(l - \Atlmth \Phi) 	
		\\
		& = 
		(l - \Atlmth \Phi)	- t \Ahh  \Almth^{-1}(l - \Atlmth \Phi) \,,
	\end{split}
	\end{equation*}
yields
	\begin{equation}\label{eq_DF_helper_2}
	\begin{split}
		& \Alm \kl{\frac{F(\lmth)- F(\lm)}{t} - \Glm(\hh)}
		\\
		& 
		=
		- \Ahh \Almth^{-1}(l - \Atlmth \Phi) + \Ahh \ulm 
		\\
		&  
		= - \Ahh \left( \ulmth - \ulm \right) \,.	
	\end{split}
	\end{equation}
By the continuity of $\Alm$ and $\Alm^{-1}$  and due to \eqref{ineq_u_cont} we can deduce that $\Glm$ is indeed the Gate\^{a}ux derivative and, due to the continuous dependence on $(\lm)$, also the Fr\'echet derivative of $F$, which 
concludes the proof. 
\end{proof}

Concerning the calculation of $F'(\lm)(\hh)$, note that it can be carried out in two distinct steps, requiring the solution of two variational problems involving the same bilinear form $\alm$ (which can be used for efficient implementation) as follows:
\begin{enumerate}
    
    \item Calculate $u \in V$ as the solution of the variational problem \eqref{prob_main_weak}.
    
	\item Calculate $F'(\lm)(\hh) \in V$ as the solution $\uh$ of the variational problem
	\begin{equation*}
		\alm(\uh,v) =
		 -\ahh(u,v) -\ahh(\Phi,v) \,,
		\qquad \forall \, v \in V \,.
	\end{equation*}	
           
\end{enumerate}

\begin{remark}
Note that for classical results on iterative regularization methods (see \cite{Kaltenbacher_Neubauer_Scherzer_2008}) 
to be applicable, one needs that both the definition space and the image space are Hilbert spaces. 
However, the operator $F$ given by \eqref{def_F} is defined on $\LiO^2$. Therefore, one could think of applying Banach space regularization theory to the problem (see for example \cite{Schoepfer_Louis_Schuster_2006, Kaltenbacher_Schoepfer_Schuster_2009, Schuster_Kaltenbacher_Hofmann_Kazimierski_2012}). Unfortunately, a commonly used assumption is that the involved Banach spaces are reflexive, which excludes $\LiO^2$. Hence, a commonly used approach is to consider a space which embeds compactly into $\LiO^2$, for example the Banach space $W^{1,p}(\Omega)^2$ or the Hilbert space $\HsO^2$ with $p$ and $s$ large enough, respectively. Although it is preferable to assume as little smoothness as possible for the Lam\'e parameters, we focus on the $\HsO^2$ setting in this paper, since the resulting inverse problem is already difficult enough to treat analytically.  
\end{remark}

Due to Sobolev's embedding theorem \cite{Adams_Fournier_2003}, the Sobolev space $\HsO$ embeds compactly into $\LiO$ for $s > N/2$, i.e., there exists a constant $\cEs > 0$ such that
	\begin{equation}\label{HsO_embedding}
		\norm{v}_\LiO \leq \cEs \norm{v}_\HsO \,, \qquad \forall \, v \in \HsO \,.
	\end{equation}  
This suggests to consider $F$ as an operator from 
	\begin{equation}\label{def_Ds}
	\begin{split}
		\Ds(F) := \{(\lm) \in \HsO^2 \vl \lambda \geq 0 \,, \mu \geq \mb > 0\} &\to \LtO^N \,,
	\end{split}
	\end{equation}
for some $s>N/2$. Since due to \eqref{HsO_embedding} there holds $\Ds(F) \subset \D(F)$, our previous results on continuity and Fr\'echet differentiability still hold in this case. Furthermore, it is now possible to consider the resulting inverse problem $F(\lm) = u$ in the classical Hilbert space framework. Hence, in what follows, we always consider $F$ as an operator from $\Ds(F) \to \LtO^2$ for some $s>N/2$.

\subsection{Calculation of the Adjoint of the Fr\'echet Derivative}\label{sec:frechet_adjoint}
We now turn to the calculation of $F'(\lm)^*w$, the adjoint of the Fr\'echet derivative $F'(\lm)$, which is 
required below for the implementation gradient descent methods. For doing so, note first that for $\Alm$ defined by \eqref{def_A}
	\begin{equation}\label{eq_A_selfadj}
		\sprD{\Alm v,\vb} = \sprD{\Alm \vb,v} \,, \qquad \forall \, v, \vb \in V \,.
	\end{equation}
This follows immediately from the definition of $\Alm$ and the symmetry of the bilinear form $\alm$. 
Moreover, as an immediate consequence of \eqref{eq_A_selfadj}, and continuity of $\Alm^{-1}$ it follows
	\begin{equation}\label{eq_Am_selfadj}
		\sprD{\vs,\Alm^{-1} \vsb } = \sprD{\vsb, \Alm^{-1} \vs } \,, \qquad \forall \, \vs, \vsb \in \Vs \,.                   
	\end{equation}
In order to give an explicit form of $F'(\lm)^*w$ we need the following
\begin{lemma}
The linear operators $T : \LtO^N  \to \Vs $, defined by
	\begin{equation}\label{def_T}
		T w := \left(v \mapsto \intV{w \cdot v}\right) \,,
	\end{equation}
and $\Es : \LoO \to \HsO$,  
	\begin{equation}\label{def_E}
		\spr{\Es u, v}_\HsO = \intV{u v} \,, \qquad \forall v \in \HsO \,,
	\end{equation}
respectively, are well-defined and bounded for all $s>N/2$.
\end{lemma}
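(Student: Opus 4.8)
The plan is to handle the two operators independently, since each reduces to a standard estimate followed, in the second case, by a representation argument. For $T$, I would fix $w \in \LtO^N$ and verify that the prescribed functional $v \mapsto \intV{w \cdot v}$ is a bounded linear functional on $V$. Linearity is immediate, and by the Cauchy--Schwarz inequality $\abs{\intV{w \cdot v}} \leq \norm{w}_\LtO \norm{v}_\LtO$. Since $V \subset \HoO^N$ and $\norm{v}_\LtO \leq \norm{v}_\HoO = \norm{v}_V$, this yields $\abs{(Tw)(v)} \leq \norm{w}_\LtO \norm{v}_V$, so that $Tw \in \Vs$ is well-defined with $\norm{Tw}_\Vs \leq \norm{w}_\LtO$. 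Linearity of $T$ in $w$ is clear, and the same inequality gives boundedness of $T$ with operator norm at most $1$.

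For $\Es$ the argument is essentially the Riesz representation theorem, but the boundedness hypothesis on the defining functional is exactly where $s > N/2$ enters. I would fix $u \in \LoO$ and consider the linear functional $v \mapsto \intV{u v}$ on $\HsO$. By H\"older's inequality $\abs{\intV{u v}} \leq \norm{u}_\LoO \norm{v}_\LiO$, and the Sobolev embedding \eqref{HsO_embedding} bounds $\norm{v}_\LiO \leq \cEs \norm{v}_\HsO$; hence the functional is bounded on $\HsO$ with norm at most $\cEs \norm{u}_\LoO$. Since $\HsO$ is a Hilbert space, the Riesz representation theorem supplies a \emph{unique} element $\Es u \in \HsO$ realizing this functional through the inner product, which is precisely the defining relation \eqref{def_E}, and its norm satisfies $\norm{\Es u}_\HsO \leq \cEs \norm{u}_\LoO$. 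Linearity of $\Es$ then follows from the uniqueness of the Riesz representative together with the linearity of $u \mapsto \intV{u v}$, and the displayed bound shows $\Es$ is bounded with norm at most $\cEs$.

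The only genuinely delicate point is the well-definedness of $\Es$, i.e. that the pairing $\intV{u v}$ for $u \in \LoO$ defines a \emph{bounded} functional on $\HsO$; this is guaranteed precisely by the continuous Sobolev embedding $\HsO \hookrightarrow \LiO$ for $s > N/2$, which is why the restriction on $s$ cannot be dropped here. For $T$ no such restriction is needed, because the test space $V$ already embeds continuously into $\Lt$. I do not anticipate any further obstacle: both assertions are direct consequences of elementary inequalities and, for $\Es$, the Riesz representation theorem on $\HsO$.
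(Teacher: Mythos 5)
Your proof is correct and follows essentially the same route as the paper: Cauchy--Schwarz for the boundedness of $T$, and the estimate $\abs{\intV{u v}} \leq \norm{u}_\LoO \norm{v}_\LiO \leq \cEs \norm{u}_\LoO \norm{v}_\HsO$ via the embedding \eqref{HsO_embedding} for $\Es$. The only cosmetic difference is that you invoke the Riesz representation theorem where the paper cites the Lax--Milgram Lemma; since the bilinear form in \eqref{def_E} is just the $\HsO$ inner product, these are the same argument here.
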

\begin{proof}
Using the Cauchy-Schwarz inequality it is easy to see that $T$ is bounded with $\norm{T}_{\LtO,\Vs} \leq 1$. Furthermore, due to \eqref{HsO_embedding},
	\begin{equation*}
		\intV{u v} \leq \norm{u}_\LoO \norm{v}_\LiO \leq \cEs \norm{u}_\LoO \norm{v}_\HsO
		\,, \quad \forall v \in \HsO \,.
	\end{equation*}	
Hence, it follows from the Lax-Milgram Lemma that $\Es$ is bounded for $s > N/2$.
\end{proof}

Using this, we can now proof the main result of this section.

\begin{theorem}\label{thm_F_D_adj}
Let $F :\Ds(F) \to \LtO^2$ with $\Ds(F)$ given as in \eqref{def_Ds} for some $s > N/2$. Then the adjoint of the Fr\'echet derivative of $F$ is given by
	\begin{equation}\label{eq_deriv_adj}
		F'(\lm)^*w = 
		\begin{pmatrix}
		\Es\left(\div{u(\lm) + \Phi} \div{- \Alm^{-1} Tw} \right)
		\\
		\Es\left(2\,\mE{u(\lm) + \Phi} : \mE{- \Alm^{-1} Tw}   \right)
		\end{pmatrix}^T\,,
	\end{equation}
where $T$ and $\Es$ are defined by \eqref{def_T} and \eqref{def_E}, respectively.
\end{theorem}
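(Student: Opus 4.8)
The plan is to verify the defining identity of the Hilbert-space adjoint, namely
\begin{equation*}
\intV{F'(\lm)(\hh) \cdot w} = \spr{(\hh),\, F'(\lm)^*w}_{\HsO^2} \,, \qquad \forall\, (\hh) \in \HsO^2,\ w \in \LtO^N \,,
\end{equation*}
and then to read off $F'(\lm)^*w$ from the right-hand side. Throughout I abbreviate $p := -\Alm^{-1} Tw \in V$. Since $F'(\lm)(\hh) \in V$, the first step is to recast the $\LtO^N$-pairing on the left as a $V$--$\Vs$ duality pairing using the operator $T$ from \eqref{def_T}: by definition, $\intV{F'(\lm)(\hh)\cdot w} = \sprD{Tw, F'(\lm)(\hh)}$. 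Substituting the formula \eqref{def_F_D} for the derivative and invoking the self-adjointness of $\Alm^{-1}$ from \eqref{eq_Am_selfadj} (together with bilinearity of the pairing to shift the minus sign) then moves the inverse operator into the other slot, giving
\begin{equation*}
\sprD{Tw,\, -\Alm^{-1}\kl{\Ahh \ulm + \Athh \Phi}} = \sprD{\Ahh \ulm + \Athh \Phi,\, -\Alm^{-1} Tw} = \sprD{\Ahh \ulm + \Athh \Phi,\, p} \,.
\end{equation*}

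Next I would unfold the duality pairing. By the definitions of $\Ahh$, $\Athh$ and of the canonical dual, together with bilinearity of $\ahh$, the last expression equals $\ahh(\ulm, p) + \ahh(\Phi, p) = \ahh(\ulm + \Phi,\, p)$, and writing out the bilinear form \eqref{def_a_bilin} yields
\begin{equation*}
\ahh(\ulm+\Phi, p) = \intV{h_\lambda\, \div{\ulm + \Phi}\div{p}} + \intV{2 h_\mu\, \mE{\ulm+\Phi}:\mE{p}} \,.
\end{equation*}
Since $\ulm + \Phi \in \HoO^N$ and $p \in V \subset \HoO^N$, each factor multiplying $h_\lambda$, respectively $h_\mu$, is a product of two $\LtO$-functions and hence lies in $\LoO$, so the operator $\Es$ from \eqref{def_E} applies. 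Using the defining relation \eqref{def_E} of $\Es$ to rewrite each integral as an $\HsO$-inner product against $h_\lambda$ (respectively $h_\mu$), the expression becomes
\begin{equation*}
\spr{\Es\kl{\div{\ulm + \Phi}\div{p}},\, h_\lambda}_\HsO + \spr{\Es\kl{2\,\mE{\ulm+\Phi}:\mE{p}},\, h_\mu}_\HsO \,.
\end{equation*}
This is precisely $\spr{(\hh),\, F'(\lm)^*w}_{\HsO^2}$ after substituting $p = -\Alm^{-1}Tw$, and reading off the two components gives \eqref{eq_deriv_adj}.

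The computation is essentially bookkeeping with dualities, so I do not expect a deep obstacle; the step requiring the most care is the second one, where the $\LtO^N$-inner product must be correctly recast as a $V$--$\Vs$ pairing through $T$ and the self-adjointness identity \eqref{eq_Am_selfadj} applied to transfer $\Alm^{-1}$ to the other argument. One must also check the integrability needed for $\Es$ to be well-defined on the divergence- and strain-products, and split the symmetric bilinear form \eqref{def_a_bilin} cleanly into its $\lambda$- and $\mu$-parts in order to identify the two entries of the adjoint, the transpose in \eqref{eq_deriv_adj} being only a notational convention for writing the pair.
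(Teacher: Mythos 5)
Your proposal is correct and takes essentially the same route as the paper's own proof: both recast the $\LtO^N$-pairing as a $V$--$\Vs$ duality via $T$, transfer $\Alm^{-1}$ to the other slot using the symmetry identity \eqref{eq_Am_selfadj}, unfold the result into $\ahh\kl{\ulm+\Phi,\,-\Alm^{-1}Tw}$, and conclude with the $\LoO$-integrability of the divergence- and strain-products together with the definition \eqref{def_E} of $\Es$. The only cosmetic difference is that you state the adjoint-defining identity up front and read the components off at the end, whereas the paper computes $\spr{F'(\lm)(\hh),w}_\LtO$ directly; the substance is identical.
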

\begin{proof}
Using Theorem~\ref{thm_F_D} and \eqref{def_T} we get
	\begin{equation*}
	\begin{split}
		&\spr{F'(\lm)(\hh), w}_{\LtO} =
		\spr{- \Alm^{-1} ( \Ahh u(\lm) + \Athh \Phi ), w}_{\LtO}
		\\
		& \qquad =
		\sprD{Tw,- \Alm^{-1} ( \Ahh u(\lm) + \Athh )\Phi )}
	\end{split}
	\end{equation*}
Together with \eqref{eq_Am_selfadj} and the definition of $\Ahh$ and $\ahh$ we get
	\begin{equation*}
	\begin{split}
		&\sprD{Tw,- \Alm^{-1} ( \Ahh u(\lm) + \Athh )\Phi )}
		=
		\ahh\left(u(\lm) + \Phi, - \Alm^{-1} Tw\right)
		\\
		&  =
		\intV{h_\lambda \div{u(\lm) + \Phi} \div{- \Alm^{-1} Tw} }
		+ \intV{2 h_\mu \, \mE{u(\lm) + \Phi} : \mE{- \Alm^{-1} Tw}}\,.
	\end{split}
	\end{equation*}
Together with the fact that the product of two $\LtO$ functions is in $\LoO$, which applies to $\div{u(\lm) + \Phi} \div{- \Alm^{-1} Tw}$ and $ \mE{u(\lm) + \Phi} : \mE{- \Alm^{-1} Tw}$, the statement of the theorem now immediately follows from the definition of $\Es$ \eqref{def_E}.
\end{proof}

Concerning the calculation of $F'(\lm)^*w$, note that it can again be carried out in independent steps, namely:

\begin{enumerate}
 
 	\item Calculate $u \in V$ as the solution of the variational problem \eqref{prob_main_weak}.
     
    \item Compute $\Alm^{-1} T w$, i.e., find the solution $u(w) \in V$ of the variational problem 
    \begin{equation*}
    \alm(u(w),v) = \intV{w \cdot v} \,, \qquad \forall \, v \in V \,.
    \end{equation*}

	\item Compute the functions $u_1(w),u_2(w) \in \LoO$ given by
		\begin{equation*}
		\begin{split}
			u_1(w) := \div{u + \Phi} \div{-u(w)}  \,,
			\\
			u_2(w) := 2 \,\mE{u + \Phi} : \mE{- u(w)}   \,.
		\end{split}
		\end{equation*}
		
	\item Calculate the functions $\lh(w) := \Es \, u_1(w)$ and $\muh(w) := \Es \, u_2(w)$ as the solutions of the variational problems
		\begin{equation*}
		\begin{split}
			\spr{\lh(w), v}_\HsO = \intV{u_1(w) \, v} \,, \qquad \forall v \in \HsO \,,
			\\
			\spr{\muh(w), v}_\HsO = \intV{u_2(w) \, v} \,, \qquad \forall v \in \HsO \,.
		\end{split}
		\end{equation*}

	\item Combine the results to obtain $F'(\lm)^*w = (\lh(w),\muh(w))$.
     
\end{enumerate}

\subsection{Reconstruction of compactly supported Lam\'e parameters}

In many cases, the Lam\'e parameters $\lm$ are known in a small neighbourhood of the boundary, for instance when contact materials are used, such as a gel in ultrasound imaging.  
As a physical problem, we have in mind a test sample consisting of a known material with various inclusions of unknown location and Lam\'e parameters inside. The resulting inverse problem is better behaved than the original problem and we are even able to prove a nonlinearity condition guaranteeing convergence of iterative solution methods for nonlinear ill-posed problems in this case.

More precisely, assume that we are given a bounded, open, connected Lipschitz domain 
$\Oo \subset \Omega$ with  $\bar{\Omega}_1 \Subset \Omega$ and background functions $0 \leq \lambda_b \in \HsO$ and $\mub \leq \mu_b \in \HsO$ and assume that the searched for Lam\'e parameters can be written in the form $(\lambda_b + \lambda, \mu_b + \mu)$, where both $\lm \in \HsO$ are compactly supported in $\Oo$. Hence, after introducing the set
	\begin{equation*}
		\Ds(\Fc) := \left\{ (\lambda, \mu ) \in \HsO^2  \vl \lambda \geq -\lambda_b \,, \,\mu \geq \mb - \mu_b > 0 \,, \, \supp((\lm))\subset \Omega_1 \right\} \,,
	\end{equation*}
we define the operator
   \begin{equation}\label{def_Fc}
	\begin{split}
      \Fc: \Ds(\Fc) \to \LtO^N \,,
      \quad
      (\lm)  \mapsto \Fc(\lm):= F(\lambda_b + \lambda,\mu_b + \mu) \,,
	\end{split}
  \end{equation}
which is well-defined for $s>N/2$. Hence, the sought for Lam\'e parameters can be reconstructed by solving the problem $\Fc(\lm) = u$ and taking $(\lambda_b + \lambda,\mu_b + \mu)$.

Continuity and Fr\'echet differentiability of $F$ also transfer to $\Fc$. For example,
	\begin{equation}\label{def_Fts_D}
		\Fc'(\lm)(\hh) = -A_{(\lambda_b + \lambda,\mu_b + \mu)}^{-1}\left(\Ahh \ulm + \Athh \Phi \right) \,.
	\end{equation}
Furthermore, a similar expression as for the adjoint of the Fr\'echet derivative of $F$ also holds for $\Fc$. Consequently, the computation and implementation of $\Fc$, its derivative and the adjoint can be carried out in the same way as for the operator $F$ and hence, the two require roughly the same amount of computational work. However, as we see in the next section, for the operator $\Fc$ it is possible to prove a nonlinearity condition.

\subsection{Strong Nonlinearity Condition}\label{sec:nc}

The so-called \emph{(strong) tangential cone condition} or \emph{(strong) nonlinearity condition} is the basis of 
the convergence analysis of iterative regularization methods for nonlinear ill-posed problems
\cite{Kaltenbacher_Neubauer_Scherzer_2008}. The nonlinearity condition is a non-standard condition in the 
field of differential equations, because it requires a stability estimate in the image domain of the operator $F$. In the theorem below we show a version of this nonlinearity condition, which is sufficient to prove convergence 
of iterative algorithms for solving \eqref{prob_main}.

\begin{theorem}\label{thm_nonlin_Fts}
Let $F : \Ds(F) \to \LtO^2$ for some $s > N/2 + 1$ and let $\Omega_1 \subset \Omega$ be a bounded, open, connected Lipschitz domain with $\bar{\Omega}_1 \Subset \Omega$. Then for each $(\lm) \in \Ds(F)$  there exists a constant $\cNL = \cNL(\lm,\Oo,\Omega)> 0$ such that for all $(\lmb) \in \Ds(F)$ satisfying $(\lm) = (\lmb)$ on $\Omega\setminus \Oo$ and $(\lm) = (\lmb)$ on $\bO_1$ there holds
	\begin{equation}\label{ineq_nonlin_Fts}
	\begin{split}
		& \norm{F(\lm)-F(\lmb) - F'(\lm)((\lm)-(\lmb))}_\LtO 
		\\
		& \qquad \qquad
		\leq \cNL \norm{(\lmbd)}_\WoiOo \norm{F(\lm)- F(\lmb)}_\LtO \,.
	\end{split}
	\end{equation}
\end{theorem}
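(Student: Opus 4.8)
The plan is to reduce the claimed inequality to an $L^2$ duality estimate for the elasticity system, the key point being that the symmetry of $a$ lets us trade one derivative of the state difference for an extra order of regularity of an adjoint solution. First I would derive a clean equation for the error. Writing $w := F(\lm) - F(\lmb) = \ulm - u(\lmb)$ and setting $(\hh) := (\lm)-(\lmb)$ (so that $\alm - \almb = \ahh$, and $\hh$ is supported in $\Oo$ and vanishes on $\partial\Oo$ by hypothesis), I subtract the weak formulations \eqref{prob_main_weak} for $(\lm)$ and $(\lmb)$ and use bilinearity of $a$ in the parameters to get
\begin{equation*}
	\alm(w,v) = -\ahh\kl{u(\lmb) + \Phi, v} \,, \qquad \forall\, v \in V \,.
\end{equation*}
Since by \eqref{def_F_D} the derivative $F'(\lm)(\hh)$ solves $\alm(F'(\lm)(\hh),v) = -\ahh(\ulm + \Phi,v)$, the error $e := F(\lm) - F(\lmb) - F'(\lm)((\lm)-(\lmb))$ satisfies the identity
\begin{equation}\label{eq_nc_err}
	\alm(e,v) = \ahh(w,v) \,, \qquad \forall\, v \in V \,.
\end{equation}

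Next I would estimate $\norm{e}_\LtO$ by duality. Since $a$ is symmetric, the adjoint of $\Alm$ is again $\Alm$ (cf. \eqref{eq_A_selfadj}), so for any $\psi \in \LtO^N$ the Lax--Milgram Lemma yields a unique $z \in V$ with $\alm(z,v) = \spr{\psi,v}_\LtO$ for all $v \in V$. Testing \eqref{eq_nc_err} with $v = z$ and the equation for $z$ with $v = e$ gives $\spr{e,\psi}_\LtO = \alm(e,z) = \ahh(w,z)$, so it suffices to bound $\abs{\ahh(w,z)}$ by $\norm{(\lmbd)}_\WoiOo \norm{w}_\LtO \norm{\psi}_\LtO$ (note $\norm{(\hh)}_\WoiOo = \norm{(\lmbd)}_\WoiOo$) and take the supremum over $\norm{\psi}_\LtO = 1$.

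The heart of the argument is that $\ahh(w,z)$ carries derivatives of $w$, whereas \eqref{ineq_nonlin_Fts} only allows $\norm{w}_\LtO$ on the right. Because $\hh$ is supported in $\Oo$, the integral runs over $\Oo$ only; I would integrate by parts there, transferring the single derivative off $w$ onto the factors $\hh$ and $z$. As $\hh = 0$ on $\partial\Oo$, all boundary terms vanish, leaving expressions of the schematic form $\int_{\Oo} w\cdot\nabla(\hh\,Dz)$, with $Dz$ a first derivative of $z$; Cauchy--Schwarz then gives
\begin{equation*}
	\abs{\ahh(w,z)} \leq C \, \norm{(\lmbd)}_\WoiOo \norm{w}_\LtO \norm{z}_\HtOo \,,
\end{equation*}
the $\HtOo$-norm arising because $\nabla(\hh\,Dz)$ involves second derivatives of $z$. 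Finally I invoke interior elliptic regularity for the Lam\'e system: since $\bar\Omega_1 \Subset \Omega$ and the coefficients $\lambda,\mu$ are Lipschitz (which holds precisely because $s > N/2+1$ gives the embedding $\HsO \hookrightarrow C^1$), the adjoint solution obeys $\norm{z}_\HtOo \leq C' \norm{\psi}_\LtO$. Combining and taking the supremum over $\psi$ yields \eqref{ineq_nonlin_Fts} with $\cNL = \cNL(\lm,\Oo,\Omega)$ absorbing $C$ and $C'$.

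The main obstacle is exactly the replacement of $\norm{w}_\HoO$ by $\norm{w}_\LtO$: this is the entire substance of the \emph{tangential cone condition} and is not accessible from the naive Lax--Milgram bound applied to \eqref{eq_nc_err}. The duality step is what buys the missing order of smoothness, but it shifts the difficulty onto the $\HtOo$-regularity of $z$, so the genuinely delicate ingredient is the interior $H^2$ estimate for the mixed displacement--traction elasticity system. Keeping $z$ away from $\partial\Omega$ (guaranteed by $\bar\Omega_1 \Subset \Omega$) is essential here, since global $H^2$-regularity may fail at the interface $\GD \cap \GT$ and on a merely Lipschitz boundary. A secondary technical point is the rigorous justification of the integration by parts for $w \in \HoO$ against $z \in \HtOo$, which is legitimate precisely because $\hh \in C^1(\bar\Oo)$ with $\hh = 0$ on $\partial\Oo$.
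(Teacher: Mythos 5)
Your proposal is correct and takes essentially the same route as the paper's proof: you derive the same error identity $\alm(e,v)=\ahh(w,v)$, estimate $\norm{e}_\LtO$ by duality against an adjoint solution $z=\Alm^{-1}T\psi$ (using symmetry of $\alm$), integrate by parts over $\Oo$ with boundary terms vanishing because the parameter difference vanishes on $\partial\Oo$, and close the argument with the interior regularity bound $\norm{z}_\HtOo \leq \cR\norm{\psi}_\LtO$ of Lemma~\ref{lem_regularity}. You also correctly single out the interior $H^2$-estimate for the mixed displacement--traction system (the paper's citation of McLean) as the genuinely delicate ingredient, which matches the paper's reasoning exactly.
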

\begin{proof}
Let $(\lm),(\lmb) \in \Ds(F)$ with $s > N/2 + 1$ such that $(\lm) = (\lmb)$ on $\Omega\setminus \Oo$ and $(\lm) = (\lmb)$ on $\bO_1$. For the purpose of this proof, set 
$u = F(\lm)$ and $\ub = F(\lmb)$. By definition, we have 
	\begin{equation*}
	\begin{split}
		&\spr{F(\lm)-F(\lmb) - F'(\lm)((\lm)-(\lmb)),w}_\LtO 
		\\
		& \quad
		= \spr{(u - \ub) - \Alm^{-1}\left(\Almbd u + \Atlmbd \Phi \right),w}_\LtO \,.
	\end{split}
	\end{equation*}
Together with \eqref{def_T} and \eqref{eq_Am_selfadj}, we get	
	\begin{equation*}
	\begin{split}
		&\spr{(u - \ub) - \Alm^{-1}\left(\Almbd u + \Atlmbd \Phi \right),w}_\LtO 
		\\
		& \qquad
		= \sprD{\Alm(u - \ub) - \left(\Almbd u + \Atlmbd \Phi \right),\Alm^{-1} Tw} \,,
	\end{split}
	\end{equation*}	
which can be written as
	\begin{equation*}
	\begin{split}
		\sprD{\Almbd\left( \ub - u  \right) ,\Alm^{-1} Tw}
		+ \sprD{\Alm(u - \ub)- \Almbd \ub - \Atlmbd \Phi,\Alm^{-1} Tw}\,.
	\end{split}
	\end{equation*}		
Now since
	\begin{equation*}
	\begin{split}
		&\Alm(u - \ub)- \Almbd \ub - \Atlmbd \Phi
		\\
		& \qquad=				
		l - \Atlm \Phi - \Alm \ub - \Almbd \ub - \Atlmbd \Phi = 0 \,,
	\end{split}
	\end{equation*}
it follows together with \eqref{eq_A_selfadj} that
	\begin{equation*}
	\begin{split}
		&\spr{F(\lm)-F(\lmb) - F'(\lm)((\lm)-(\lmb)),w}_\LtO 
		\\
		& \qquad
		= \sprD{\Almbd\left( \ub - u  \right) ,\Alm^{-1} Tw}
		= \sprD{\Almbd \Alm^{-1} Tw,\ub - u}	\,.
	\end{split}
	\end{equation*}
Introducing the abbreviation $z := \Alm^{-1} Tw$, and using the definition of $\Almbd$
	\begin{equation*}
	\begin{split}
		& \sprD{\Almbd z, \ub - u}  = \almbd( z, \ub - u) 
		\\
		& \quad =
		\intVo{\left( (\lb - \lambda) \,\div{z}\div{ \ub - u} + 2(\mub - \mu ) \,\mE{z}:\mE{ \ub - u}\right) } 	\,,
	\end{split}
	\end{equation*}
where we have used that $(\lmbd) = 0$ on $\Omega\setminus \Oo$. Since we also have $(\lmbd) = 0$ on $\bO_1$, partial integration together with the regularity result Lemma~\ref{lem_regularity} yields
	\begin{equation}\label{helper_nonlin_cond_3}
	\begin{split}
		&\intVo{\left( (\lb - \lambda) \,\div{z}\div{ \ub - u} + 2(\mub - \mu ) \,\mE{z}:\mE{ \ub - u}\right) } 	
		\\
		& \quad = -\intVo{\div{ (\lb - \lambda) \,\div{z}I + 2(\mub - \mu ) \,\mE{z} } \cdot (\ub - u) } 
		\\
		& \quad \leq \norm{ \div{ (\lb - \lambda) \,\div{z}I + 2(\mub - \mu ) \,\mE{z} } }_\LtOo \norm{\ub - u}_\LtOo \,.
	\end{split}
	\end{equation}	
Now, since there exists a constant $\cG = \cG(N)$ such that for all $v \in \HtOo^N$  
	\begin{equation*}
		\norm{ \div{ \lambda \,\div{v}I + 2\mu  \,\mE{v} }  }_\LtOo 
		\leq \cG \max\{\norm{\lambda}_\WoiOo, \norm{\mu}_\WoiOo \} \norm{v}_\HtOo \,.
	\end{equation*}
Now since
	\begin{equation*}
	\begin{split}
		& \norm{F(\lm)-F(\lmb) - F'(\lm)((\lm)-(\lmb))}_\LtO
		\\
		& \quad 
		= \sup_{\norm{w}_\LtO = 1} 
		\spr{F(\lm)-F(\lmb) - F'(\lm)((\lm)-(\lmb)),w}_\LtO \,,
	\end{split}
	\end{equation*}
combining the above results we get
	\begin{equation*}
	\begin{split}
		&\sup_{\norm{w}_\LtO = 1} 
		\spr{F(\lm)-F(\lmb) - F'(\lm)((\lm)-(\lmb)),w}_\LtO 
		\\
		& \quad \leq 
		\sup_{\norm{w}_\LtO = 1} \cG \norm{(\lb -\lambda, \mub - \mu)}_\WoiOo \norm{z}_\HtOo  \norm{\ub - u}_\LtOo  \,.
	\end{split}
	\end{equation*}
Together with Lemma~\ref{lem_regularity}, which implies that there exists a constant $\cR > 0$ such that $\norm{z}_\HtOo \leq \cR \norm{w}_\LtOo$, we get	
	\begin{equation*}
	\begin{split}
		& \norm{F(\lm)-F(\lmb) - F'(\lm)((\lm)-(\lmb))}_\LtO
		\\
		& \quad \leq 
		\cG \, \cR \norm{(\lb -\lambda, \mub - \mu)}_\WoiOo \norm{\ub - u}_\LtOo 
		\\
		& \quad \leq
		\cG \, \cR \norm{(\lb -\lambda, \mub - \mu)}_\WoiOo \norm{\ub - u}_\LtO \,,
	\end{split}
	\end{equation*}
which immediately yields the assertion with $\cNL := \cG \, \cR$.
\end{proof}

We get the following useful corollary

\begin{corollary}\label{corr_nonlin_Fc}
Let $\Fc$ be defined as in \eqref{def_Fc} for some $s > N/2 + 1$. Then for each $(\lm) \in \Ds(\Fc)$  there exists a constant $\cNL = \cNL(\lm,\Oo,\Omega)> 0$ such that for all $(\lmb) \in \Ds(\Fc)$ there holds
	\begin{equation}\label{nonlin_cond_Fc}
	\begin{split}
		& \norm{\Fc(\lm)-\Fc(\lmb) - \Fc'(\lm)((\lm)-(\lmb))}_\LtO 
		\\
		& \qquad \qquad
		\leq \cNL \norm{(\lmbd)}_\WoiOo \norm{\Fc(\lm)- \Fc(\lmb)}_\LtO \,.
	\end{split}
	\end{equation}
\end{corollary}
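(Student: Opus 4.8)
The plan is to deduce \eqref{nonlin_cond_Fc} directly from Theorem~\ref{thm_nonlin_Fts} by passing to the shifted parameter pairs, since by construction $\Fc$ is just $F$ precomposed with the affine shift by the backgrounds. Given $(\lm),(\lmb) \in \Ds(\Fc)$, I would set $(\ls,\ms):=(\lambda_b+\lambda,\mu_b+\mu)$ and $(\lti,\mti):=(\lambda_b+\bar\lambda,\mu_b+\bar\mu)$. The first routine task is to check that both pairs lie in $\Ds(F)$: indeed $\lambda_b+\lambda \geq 0$ and $\mu_b+\mu \geq \mb$ follow immediately from the constraints $\lambda \geq -\lambda_b$ and $\mu \geq \mb-\mu_b$ defining $\Ds(\Fc)$ (and likewise for the barred pair), while $\HsO$-regularity is preserved under adding the $\HsO$-backgrounds. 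By the definition \eqref{def_Fc} we then have $\Fc(\lm)=F(\ls,\ms)$ and $\Fc(\lmb)=F(\lti,\mti)$, while from \eqref{def_Fts_D} the derivative satisfies $\Fc'(\lm)=F'(\ls,\ms)$.

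The crucial step is to verify that $(\ls,\ms)$ and $(\lti,\mti)$ meet the two equality hypotheses of Theorem~\ref{thm_nonlin_Fts}, namely that they agree on $\Omega\setminus\Oo$ and on $\bO_1$. This is exactly where the compact support enters: since $\supp((\lm))\subset\Oo$ and $\supp((\lmb))\subset\Oo$, both $\lm$ and $\lmb$ vanish on $\Omega\setminus\Oo$, so there $(\ls,\ms)=(\lambda_b,\mu_b)=(\lti,\mti)$. Moreover, because $\Oo$ is open we have $\bO_1\cap\Oo=\emptyset$, so every point of $\bO_1$ lies outside the support; together with the continuous representatives furnished by the embedding $\HsO\hookrightarrow C(\overline\Omega)$ for $s>N/2$, this shows that $\lm$ and $\lmb$ vanish on $\bO_1$ as well, whence again $(\ls,\ms)=(\lti,\mti)$ there. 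Thus Theorem~\ref{thm_nonlin_Fts} applies with base point $(\ls,\ms)$, yielding a constant $\cNL=\cNL(\ls,\ms,\Oo,\Omega)$ — which, the background being fixed, depends only on $(\lm)$ — such that the estimate \eqref{ineq_nonlin_Fts} holds for the shifted pair.

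It then remains only to translate that estimate back, which amounts to a bookkeeping check that the backgrounds cancel in every difference. Concretely, $(\ls,\ms)-(\lti,\mti)=(\lambda-\bar\lambda,\mu-\bar\mu)=(\lm)-(\lmb)$, so that $F'(\ls,\ms)\bigl((\ls,\ms)-(\lti,\mti)\bigr)=\Fc'(\lm)((\lm)-(\lmb))$; likewise $\norm{(\ls-\lti,\ms-\mti)}_\WoiOo=\norm{(\lmbd)}_\WoiOo$. Substituting these identities, together with $\Fc(\lm)=F(\ls,\ms)$ and $\Fc(\lmb)=F(\lti,\mti)$, into \eqref{ineq_nonlin_Fts} produces precisely \eqref{nonlin_cond_Fc}.

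Since the argument is essentially a change of variables through the affine background shift, I do not expect a serious obstacle; the single point requiring genuine care is the vanishing of $\lm$ and $\lmb$ on $\bO_1$, as this is what lets the shifted pairs satisfy \emph{both} equality hypotheses of Theorem~\ref{thm_nonlin_Fts}. That vanishing rests on the support condition built into $\Ds(\Fc)$ combined with the continuous Sobolev embedding valid for $s>N/2$, so the stronger assumption $s>N/2+1$ inherited from Theorem~\ref{thm_nonlin_Fts} is more than enough.
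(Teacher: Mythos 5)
Your proposal is correct and takes essentially the same route as the paper, whose entire proof is the remark that the corollary ``follows from the definition of $\Fc$ and (the proof of) Theorem~\ref{thm_nonlin_Fts}'' --- that is, exactly the affine background-shift reduction you carry out. Your explicit verification of the two agreement hypotheses (on $\Omega\setminus\Oo$ and on $\bO_1$) via the compact-support condition in $\Ds(\Fc)$ and the continuous representatives from $\HsO\hookrightarrow C(\overline{\Omega})$ for $s>N/2$ simply fills in the routine details the paper leaves implicit.
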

\begin{proof}
This follows from the definition of $\Fc$ and (the proof of) Theorem~\ref{thm_nonlin_Fts}.
\end{proof}

In the following theorem, we establish a similar result as in Corollary~\ref{corr_nonlin_Fc} now for $F : \Ds(F) \to \LtO^2$ in case that $\GT=\emptyset$, i.e., $\GD = \bO$ and that $\bO$ is smooth enough.

\begin{theorem}\label{thm_nonlin_Fs}
Let $F:\Ds(F) \to \LtO^2$ for some $s > N/2 + 1$ and let $\bO = \GD \in C^{1,1}$ and $\GT = \emptyset$. Then for each $(\lm) \in \Ds(F)$  there exists a constant $\cNL = \cNL(\lm,\Omega)> 0$ such that for all $(\lmb) \in \Ds(F)$, 
there holds
	\begin{equation}\label{ineq_nonlin_Fs}
	\begin{split}
		& \norm{F(\lm)-F(\lmb) - F'(\lm)((\lm)-(\lmb))}_\LtO 
		\\
		& \qquad \qquad
		\leq \cNL \norm{(\lmbd)}_\WoiO \norm{F(\lm)- F(\lmb)}_\LtO \,.
	\end{split}
	\end{equation}
\end{theorem}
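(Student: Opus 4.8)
The plan is to follow the proof of Theorem~\ref{thm_nonlin_Fts} almost line by line, changing only the two places where the compact-support hypotheses were used. First I would observe that the purely algebraic reduction transfers verbatim, since it never invokes the support of $(\lmbd)$ nor its boundary values. Setting $u = F(\lm)$, $\ub = F(\lmb)$ and $z := \Alm^{-1} Tw$, the manipulations based on \eqref{eq_Am_selfadj}, \eqref{eq_A_selfadj} and the identity $\Alm(u-\ub) - \Almbd\ub - \Atlmbd\Phi = 0$ again produce
\begin{equation*}
\spr{F(\lm)-F(\lmb) - F'(\lm)((\lm)-(\lmb)),w}_\LtO = \almbd(z,\ub - u) \,.
\end{equation*}
The only difference from before is that the integral defining $\almbd(z,\ub-u)$ now runs over all of $\Omega$ rather than over $\Oo$.

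The decisive structural change occurs at the integration by parts. In Theorem~\ref{thm_nonlin_Fts} the boundary contributions vanished because $(\lmbd)=0$ on $\bO_1$; here I would instead exploit the pure Dirichlet setting $\GD = \bO$. Since $u=F(\lm)$ and $\ub=F(\lmb)$ both lie in $V=\HoS^N$ and $\GD=\bO$, the difference $\ub-u$ vanishes on the entire boundary, i.e.\ $\ub-u \in H^1_0(\Omega)^N$. Hence partial integration produces no boundary term and yields
\begin{equation*}
\almbd(z,\ub - u) = -\intV{\div{ (\lb - \lambda)\div{z}I + 2(\mub - \mu)\mE{z}} \cdot (\ub - u)} \,.
\end{equation*}

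From here I would mimic the remaining estimates, now over $\Omega$. Applying the product bound $\norm{\div{\lambda\div{v}I + 2\mu\mE{v}}}_\LtO \leq \cG \max\{\norm{\lambda}_\WoiO,\norm{\mu}_\WoiO\}\norm{v}_\HtO$ with $v=z$ and coefficients $(\lmbd)$, together with the Cauchy--Schwarz inequality, gives $\abs{\almbd(z,\ub-u)} \leq \cG\norm{(\lmbd)}_\WoiO \norm{z}_\HtO \norm{\ub-u}_\LtO$. Taking the supremum over $\norm{w}_\LtO = 1$ and inserting a regularity estimate $\norm{z}_\HtO \leq \cR\norm{w}_\LtO$ would then produce \eqref{ineq_nonlin_Fs} with $\cNL := \cG\,\cR$. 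The main obstacle, and the reason the hypotheses $\bO \in C^{1,1}$ and $s > N/2+1$ are imposed, is establishing this last regularity estimate in its \emph{global}, up-to-the-boundary form: whereas Theorem~\ref{thm_nonlin_Fts} needed only interior $H^2$ regularity on $\Oo \Subset \Omega$ (insensitive to $\bO$), here $z=\Alm^{-1}Tw$ solves the Lam\'e system with Dirichlet data on all of $\bO$ and an $\LtO$ right-hand side, so one requires $H^2(\Omega)$ regularity up to the boundary. This is exactly the regime in which one invokes the $C^{1,1}$ smoothness of $\bO$ together with the Lipschitz continuity of $\lm$ (guaranteed by the embedding $\HsO \hookrightarrow \WoiO$ for $s > N/2+1$); I expect this global elliptic regularity statement to be the up-to-the-boundary counterpart of Lemma~\ref{lem_regularity}.
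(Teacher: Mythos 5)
Your proposal matches the paper's proof: the paper likewise runs the argument of Theorem~\ref{thm_nonlin_Fts} with $\Omega_1 = \Omega$, notes that the boundary term in the integration by parts vanishes because $\ub = u = 0$ on $\bO = \GD$, and invokes global $\HtO$ regularity up to the $C^{1,1}$ boundary (citing \cite[Theorems~4.16 and 4.18]{McLean_2000}) in place of the interior estimate of Lemma~\ref{lem_regularity}. Both your structural changes and your identification of the global regularity estimate as the crux are exactly the paper's reasoning.
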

\begin{proof}
The prove of this theorem is analogous to the one of Theorem~\ref{thm_nonlin_Fts}, noting that for this choice of boundary condition, the regularity results of Lemma~\ref{lem_regularity} also hold on the entire domain, i.e., for $\Omega_1 = \Omega$, which follows for example from \cite[Theorem~4.16 and Theorem~4.18]{McLean_2000}. Furthermore, the boundary integral appearing in the partial integration step in \eqref{helper_nonlin_cond_3} also vanishes in this case, since $\ub = u = 0$ on $\bO$ due to the assumption that $\bO = \GD$.
\end{proof}

As can be found for example in \cite{Necas_2011, Ciarlet_1994, Agmon_Douglis_Nirenberg_1959, Gilbarg_Trudinger_1998}, $\HtO$ regularity and hence the above theorem can also be proven under weaker smoothness assumptions on the domain $\Omega$. For example, it suffices that $\Omega$ is a convex Lipschitz domain.

\begin{remark}
Note that \eqref{nonlin_cond_Fc} is already strong enough to prove convergence of the Landweber iteration for the operator $\Fc$ to a solution $(\lmD)$ given that the initial guess $(\lmz)$ is chosen close enough to $(\lmD)$ \cite{Hanke_Neubauer_Scherzer_1995, Kaltenbacher_Neubauer_Scherzer_2008}. Furthermore, if there is a $\rb > 0$ such that
	\begin{equation}\label{cond_cR_bounded}
		\sup\limits_{(\lm) \in \, \Brb(\lmD) \cap \Ds(\Fc)} \cR(\lm,\Oo,\Omega) \, < \infty \,,
	\end{equation}
then for each $\eta > 0$ there exists a $\rho > 0$ such that
	\begin{equation*}
	\begin{split}
		\norm{\Fc(\lm)-\Fc(\lmb) - \Fc'(\lm)((\lm)-(\lmb))}_\LtO 
		\leq \eta \norm{\Fc(\lm)-\Fc(\lmb)}_\LtO \,,
		\\
		\forall \, (\lm),(\lmb) \in \Btr(\lmz) \,,
	\end{split}
	\end{equation*}	
which is the original, well-known nonlinearity condition \cite{Hanke_Neubauer_Scherzer_1995}. Obviously, the same statements also hold analogously for the $F:\Ds(F) \to \LtO$ under the assumptions of Theorem~\ref{thm_nonlin_Fs}. 
Note further that condition \eqref{cond_cR_bounded} follows directly from the proofs of \cite[Theorem~4.16 and Theorem~4.18]{McLean_2000}.
\end{remark}

\subsection{An Informal Discussion of Source Conditions}\label{sec:source}

For general inverse problems of the form $F(x) = y$, source conditions of the form
	\begin{equation}\label{sourcecond_general}
		\xD - x_0 \in \Range(F'(\xD)^*) \,,
	\end{equation}
where $\xD$ and $x_0$ denote a solution of $F(x)=y$ and an initial guess, respectively, are important for showing convergence rates or even proving convergence of certain gradient-type methods for nonlinear ill-posed problems \cite{Kaltenbacher_Neubauer_Scherzer_2008}. In this section, we make an investigation of the source condition for $F:\Ds(F) \to \LtO^N$ and $N=2,3$.

\begin{lemma}
Let $F:\Ds(F) \to \LtO^N$ with $s > N/2 + 1$. Then \eqref{sourcecond_general} is equivalent to the existence of a $w \in \LtO^N$ such that
	\begin{equation}\label{sourcecond_Fs_expl}
		\begin{pmatrix}
			\lDz \\ \mDz 
		\end{pmatrix} =
		\begin{pmatrix}
			\Es\left(\div{u(\lmD) + \Phi} \div{- \AlmD^{-1} Tw} \right)
			\\
			\Es\left(2\,\mE{u(\lmD) + \Phi} : \mE{- \AlmD^{-1} Tw}   \right)
		\end{pmatrix}\,.
	\end{equation} 
\end{lemma}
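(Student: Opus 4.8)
The plan is to reduce the claimed equivalence to the closed form of the adjoint already derived in Theorem~\ref{thm_F_D_adj}, since essentially no extra work is required beyond unwinding a definition. By definition of the range of the adjoint, the abstract source condition \eqref{sourcecond_general}, specialized to $\xD = (\lmD)$ and $x_0 = (\lmz)$, holds if and only if there is some $w$ in the preimage space $\LtO^N$ of $F'(\lmD)^*$ with
\[
(\lDz,\, \mDz) = F'(\lmD)^* w \,.
\]
Thus the entire statement amounts to rewriting $F'(\lmD)^* w$ explicitly and comparing components.

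To carry this out, I would apply Theorem~\ref{thm_F_D_adj} with the generic argument $(\lm)$ replaced by the solution $(\lmD)$; this is legitimate because $(\lmD) \in \Ds(F)$ and the hypothesis $s > N/2 + 1$ is stronger than the $s > N/2$ required there. Substituting, one obtains
\[
F'(\lmD)^* w =
\begin{pmatrix}
\Es\left(\div{u(\lmD) + \Phi}\, \div{- \AlmD^{-1} Tw} \right)
\\
\Es\left(2\,\mE{u(\lmD) + \Phi} : \mE{- \AlmD^{-1} Tw} \right)
\end{pmatrix}^{T},
\]
and a componentwise comparison with the pair $(\lDz, \mDz)$ gives exactly \eqref{sourcecond_Fs_expl}. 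Conversely, the existence of a $w \in \LtO^N$ solving \eqref{sourcecond_Fs_expl} is, by the same formula, precisely the statement that $(\lDz,\mDz) \in \Range(F'(\lmD)^*)$, which closes the equivalence.

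I do not expect a genuine obstacle here, as the result is a direct restatement of the source condition through the already-established representation of the adjoint. The only two points worth a line are: first, that the pair $(\lDz, \mDz) \in \HsO^2$ is identified with the two scalar entries of $F'(\lmD)^*w$, the transpose in Theorem~\ref{thm_F_D_adj} being merely a device for printing a column as a row; and second, that each entry genuinely lies in $\HsO$, and hence in the definition space, because $\Es$ maps $\LoO$ into $\HsO$ and both $\div{u(\lmD)+\Phi}\div{-\AlmD^{-1}Tw}$ and $\mE{u(\lmD)+\Phi}:\mE{-\AlmD^{-1}Tw}$ belong to $\LoO$ as products of two $\LtO$ functions.
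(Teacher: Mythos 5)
Your proof is correct and coincides with the paper's own argument, which simply states that the lemma follows immediately from Theorem~\ref{thm_F_D_adj}: the source condition is by definition the solvability of $(\lDz,\mDz)=F'(\lmD)^*w$ for some $w\in\LtO^N$, and substituting the explicit adjoint formula yields \eqref{sourcecond_Fs_expl}. Your two added remarks --- that the transpose is only typographical and that each component lies in $\HsO$ since $\Es$ maps $\LoO$ into $\HsO$ --- are accurate and merely make explicit what the paper leaves implicit.
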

\begin{proof}
This follows immediately from \eqref{thm_F_D_adj}.
\end{proof}

Hence, one has to have that $\lDz \in \Range(\Es)$ and $\mDz \in \Range(\Es)$ and
	\begin{equation}\label{sourcecond_h1}
		\begin{pmatrix}
			\Es^{-1}(\lDz) \\ \Es^{-1}(\mDz) 
		\end{pmatrix} =
		\begin{pmatrix}
			\div{u(\lmD) + \Phi} \div{- \AlmD^{-1} Tw} 
			\\
			2\,\mE{u(\lmD) + \Phi} : \mE{- \AlmD^{-1} Tw}  
		\end{pmatrix}\,.
	\end{equation}
If $\div{u(\lmD) + \Phi} \div{- \AlmD^{-1} Tw}$ and $	2\,\mE{u(\lmD) + \Phi} : \mE{- \AlmD^{-1} Tw} $ are in $\LtO$, which is for example the case if $w$ as well as $f$, $\Phi$, $\gD$ and $\gT$ satisfy additional $L^p(\Omega)$ regularity \cite{Ciarlet_1994}, then $\Es$ coincides with $i^*$, where $i$ is given as the embedding operator from $\HsO \to \LtO$. In this case, $\lDz \in \Range(\Es)$ and $\mDz \in \Range(\Es)$ imply a certain differentiability and boundary conditions on $\lDz$ and $\mDz$. Now, if 
	\begin{equation*}\label{cond_source_div_u}
		\frac{\Es^{-1}(\lDz)}{\div{u(\lmD) + \Phi}} \in \LtO \,,
	\end{equation*}
then \eqref{sourcecond_h1} can be rewritten as
	\begin{equation}\label{sourcecond_h2}
		\begin{pmatrix}
			\Es^{-1}(\lDz)/\div{u(\lmD) + \Phi} \\ \Es^{-1}(\mDz) 
		\end{pmatrix} =
		\begin{pmatrix}
			\div{- \AlmD^{-1} Tw} 
			\\
			2\,\mE{u(\lmD) + \Phi} : \mE{- \AlmD^{-1} Tw}  
		\end{pmatrix}\,.
	\end{equation}	
Since $\AlmD^{-1}Tw \in \, V \subset \HoO^N$, by the Helmholtz decomposition there exists a function $\phi = \phi(w) \in \HtO$ and a vector field $\psi = \psi(w) \in \HtO^N$ such that
	\begin{equation*}
	\begin{split}
		-\AlmD^{-1} Tw = \grad \phi(w) + \grad \times \psi(w) \,,
		\\
		\kl{\grad \phi(w) + \grad \times \psi(w) }\vert_\GD = 0 \,.
	\end{split}
	\end{equation*} 
Hence, \eqref{sourcecond_h2} is equivalent to 
	\begin{equation}\label{sourcecond_h3}
	\begin{split}
		\Delta \phi(w) &= \Es^{-1}(\lDz)/\div{u(\lmD) + \Phi} \,,
		\\
		\Es^{-1}(\mDz) &= 	2\,\mE{u(\lmD) + \Phi} : \mE{ \grad \phi(w) + \grad \times \psi(w) }  \,,
		\\
		\kl{\grad \phi(w) + \grad \times \psi(w) }\vert_\GD &= 0 \,.
	\end{split}
	\end{equation}	
Note that once $\phi$ and $\psi$ are known such that $	-\AlmD^{-1} Tw = \grad \phi + \grad \times \psi$ holds, $w$ can be uniquely recovered in the following way. Due to the Lax-Milgram Lemma, there exists an element $z(\phi,\psi) \in V$ such that
	\begin{equation*}
		-\sprD{\AlmD \kl{\grad \phi + \grad \times \psi},v } = \spr{z(\phi,\psi),v}_V \,, \qquad \forall \, v\in V \,.
	\end{equation*}
However, since
	\begin{equation*}
	\begin{split}
		&-\sprD{\AlmD \kl{\grad \phi(w) + \grad \times \psi(w)},v }  
		\\
		& \qquad = \sprD{Tw,v} = \spr{w,v}_\LtO = \spr{i_V^*w,v}_V \,,
	\end{split}
	\end{equation*}
where $i_V$ denotes the embedding from $V$ to $\LtO^N$, there follows $z(\phi,\psi) \in \Range(i_V^*)$ and $w$ can be recovered by $w = (i_V^*)^{-1}z(\phi,\psi)$.

\begin{remark}
Hence, we derive that the source condition \eqref{sourcecond_Fs_expl} holds for the solution $(\lmD)$ and the initial guess $(\lmz)$ under the following assumptions:
	\begin{itemize}
		\item $\lDz \in \Range(\Es)$ and $\mDz \in \Range(\Es)$ \,,
		\item there holds
		\begin{equation}
			\frac{\Es^{-1}(\lDz)}{\div{u(\lmD) + \Phi}} \in \LtO \,,
		\end{equation}
		\item there exist functions $\phi \in \HtO$ and $\psi \in \HtO^N$ such that
			\begin{equation*}
			\begin{split}
				\Delta \phi &= \Es^{-1}(\lDz)/\div{u(\lmD) + \Phi} \,,
				\\
				2\,\mE{u(\lmD) + \Phi} : \mE{ \grad \phi + \grad \times \psi } &= 	\Es^{-1}(\mDz)  \,,
				\\
				\kl{\grad \phi + \grad \times \psi }\vert_\GD &= 0 \,,
			\end{split}
			\end{equation*}
		\item the unique weak solution $z(\phi,\psi) \in V$ of the variational problem
			\begin{equation*}
				-\sprD{\AlmD \kl{\grad \phi + \grad \times \psi},v } = \spr{z(\phi,\psi),v}_V \,, \qquad \forall \, v\in V \,, 
			\end{equation*}
		satisfies $z(\phi,\psi) \in \Range(i_V^*)$.
	\end{itemize}
\end{remark}

The above assumptions are restrictive, which is as usual \cite{Kaltenbacher_Neubauer_Scherzer_2008}. However, without these 
assumptions one cannot expect convergence rates.

\begin{remark}
Note that since $u(\lmD) + \Phi$ is the weak solution of the non-homogenized problem \eqref{prob_forward_non_hom}, condition \eqref{cond_source_div_u} implies that in areas of a divergence free displacement field, one has to know the true Lam\'e parameter $\lD$. This should be compared to similar conditions in \cite{Widlak_Scherzer_2015,Bal_Uhlmann_2012,Bal_Uhlmann_2013, Bal_Naetar_Scherzer_Scotland_2013}.
\end{remark}

\begin{remark}
Note that if the source condition is satisfied, then it is known that the iteratively regularized Landweber and Gauss-Newton 
iterations converge, even if the nonlinearity condition is not satisfied \cite{Scherzer_1996, Bakuschinsky_Kokurin_2004, Bakushinskii_1992}. 
\end{remark}

\section{Numerical Examples}

In this section, we present some numerical examples demonstrating the reconstructions of Lam\'e parameters from given noisy displacement field measurements $\ud$ using both the operators $\Frs$ and $\Fc$ considered above. The sample problem, described in detail in Section~\ref{sect_setting} is chosen in such a way that it closely mimics a possible real-world setting described below. Furthermore, results are presented showing the reconstruction quality for both smooth and non-smooth Lam\'e parameters.

\subsection{Regularization Approach - Landweber Iteration}

For reconstructing the Lam\'e parameters, we use a Two-Point Gradient (TPG) method \cite{Hubmer_Ramlau_2017} based on Landweber's 
iteration and on Nesterov's acceleration scheme \cite{Nesterov_1983} which, using the abbreviation 
$\xkd = \kl{\lkd,\mu_k^\delta}$, read as follows,
    \begin{equation}\label{Nesterov}
         \begin{split}
            \zkd &= \xkd + \akd \kl{\xkd - \xkmd} 
            \,,
            \\ 
            \xkpd &= \zkd + \okd\kl{\zkd} \skd\kl{\zkd} \,,
            \quad \skd\kl{x} := F'\kl{x}^*\kl{\ud - F\kl{x}} \,.
        \end{split}
    \end{equation} 
For linear ill-posed problems, a constant stepsize $\okd$ and $\akd = (k-1)/(k+\alpha-1)$, this method was analysed in \cite{Neubauer_2017}. For nonlinear problems, convergence of \eqref{Nesterov} under the tangential cone condition was shown in \cite{Hubmer_Ramlau_2017} when the discrepancy principle is used as a stopping rule, i.e., the iteration is stopped after $\ks$ steps, with $\ks$ satisfying
    \begin{equation}\label{discrepancy_principle}
        \norm{\ud - F\kl{\xksd}} \le \tau \delta \le \norm{\ud - F\kl{\xkd}}\,, \qquad 0\le k \le k_*\,,
    \end{equation}
where the parameter $\tau$ should be chosen such that
	\begin{equation*}
		\tau > 2\frac{1+\eta}{1-2\eta},
	\end{equation*} 
although the choices $\tau = 2$ or $\tau$ close to $1$ suggested by the linear case are also very popular. For the stepsize $\okd$ we use the steepest descent stepsize \cite{Scherzer_1996} and for $\akd$ we use the well-known Nesterov choice, i.e.,
    \begin{equation}\label{steepest_descent}
        \okd(x) := \frac{\norm{\skd\kl{x}}^2 }{\norm{F'(x)\skd(x)}^2} \,,
        \qquad \text{and} \qquad
        \akd = \frac{k-1}{k+2} \,.
    \end{equation}
The method \eqref{Nesterov} is known to work well for both linear and nonlinear inverse problems \cite{Jin_2016,Hubmer_Neubauer_Ramlau_Voss_2018} and also serves as the basis of the well-known FISTA algorithm \cite{Beck_Teboulle_2009} for solving linear ill-posed problems with sparsity constraints.

\subsection{Problem Setting, Discretization, and Computation}\label{sect_setting}

A possible real-world problem the authors have in mind is a cylinder shaped object made out of agar with a symmetric, ball shaped inclusion of a different type of agar with different material properties and hence, different Lam\'e parameters. The object is placed on a surface and a constant downward displacement is applied from the top while the outer boundary of the object is allowed to move freely. Due to a marker substance being injected into the object beforehand, the resulting displacement field can be measured inside using a combination of different imaging modalities. Since the object is rotationally symmetric, this also holds for the displacement field, which allows for a relatively high resolution $2$D image.

Motivated by this, we consider the following setup for our numerical example problem: For the domain $\Omega$, we choose a rectangle in $2$D, i.e., $N=2$. We split the boundary $\bO$ of our domain into a part $\GD$ consisting of the top and the bottom edge of the rectangle and into a part $\GT$ consisting of the remaining two edges. Since the object is free to move on the sides, we set a zero traction condition on $\GT$, i.e., $\gT = 0$. Analogously for $\GD$, since the object is fixed to the surface and a constant displacement is being applied from above, we set $\gD = 0$ and $\gD = \cP = \text{const}$ on the parts of $\GD$ corresponding to the bottom and the top edge of the domain. 

If, for simplicity, we set $\Omega = (0,1)^2$, then the underlying non-homogenized forward problem \eqref{prob_forward_non_hom} simplifies to
  \begin{alignat}{2} 
      -\div{\sigma(\ut(x))} & = 0 \,, \quad && x \in (0,1)^2 \,, \nonumber\\
      \ut(x)  & = 0 \,, \quad && x \in [0,1]\times \{0\} \,,\nonumber\\
      \ut(x)  & = \cP \,, \quad && x \in  [0,1]\times \{1\} \,, \nonumber\\
      \sigma(\ut(x)) \n(x)  & = 0 \,, \quad && x \in {\{0,1\}\times[0,1]} \,. \label{numerics_prob}
  \end{alignat}
The homogenization function $\Phi$ can be chosen as $\Phi(x_1,x_2) := \cP \, x_2$ in this case.

In order to define the exact Lam\'e parameters $(\lmD)$, we first need to introduce the following family $\Brh$ of symmetric $2$D bump functions with a circular plateau
	\begin{equation*}
		\Brh(x,y) := 
		\begin{cases}
		h_1 \,, & \sqrt{x^2+y^2} \leq r_1 \,, 
		\\
		h_2 \,, & \sqrt{x^2+y^2} \geq r_2 \,,
		\\
		\Srh(\sqrt{x^2 + y^2}) \,, & r_1 < \sqrt{x^2 + y^2} < r_2 \,,
		\end{cases}
	\end{equation*}
where $\Srh$ is a $5$th order polynomial chosen such that the resulting function $\Brh$ is twice continuously differentiable. The exact Lam\'e parameters $(\lmD)$ are then created by shifting the function $\Brh$ and using different values of $r_1,r_2,h_1,h_2$; see Figure~\ref{fig_Lame_smooth}.	
	
\begin{figure}[H]
\centering
\includegraphics[width=0.45\textwidth]{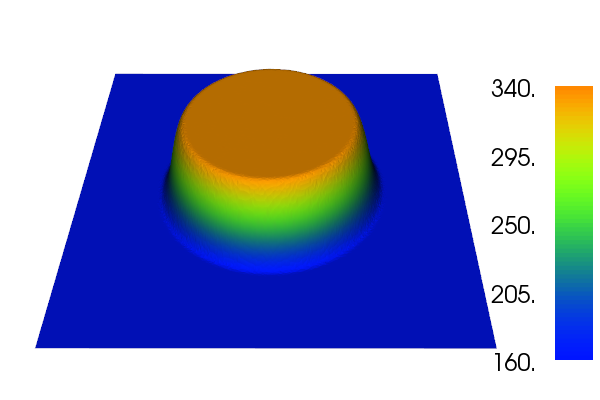}
\includegraphics[width=0.45\textwidth]{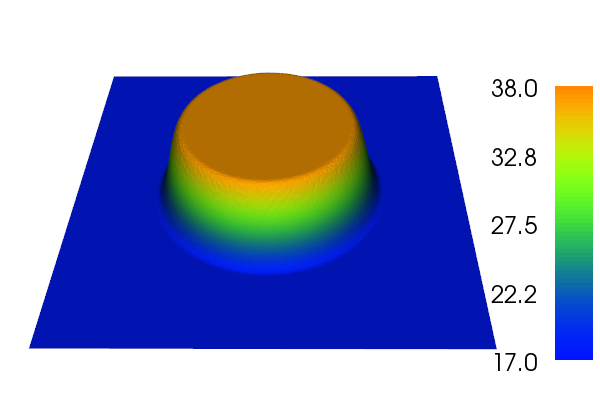}
\caption{Exact Lam\'e parameters $(\lmD)$, in kPa.}
\label{fig_Lame_smooth}
\end{figure} 

As we have seen, a certain smoothness in the exact Lam\'e parameters is required for reconstruction with the operators $F \vert_{\Ds(F)} $ and $\Fc$. Although this might be an unnatural assumption in some cases as different materials next to each other may have Lam\'e parameters of high contrast, it can be justified in the case of the combined agar sample, since when combining the different agar samples into one, the transition from one type of agar into the other can be assumed to be continuous, leading to a smooth behaviour of the Lam\'e parameters in the transition area.

However, since we also want to see the behaviour of the reconstruction algorithm in case of non-smooth Lam\'e parameters $(\lmD)$, we also look at $(\lmD)$ depicted in Figure~\ref{fig_Lame_non_smooth}, which were created using $\Brh$ with $r_1 \approx r_2$ and which, although being twice continuously differentiable in theory, behave like discontinuous functions after discretization.

\begin{figure}[H]
\centering
\includegraphics[width=0.45\textwidth]{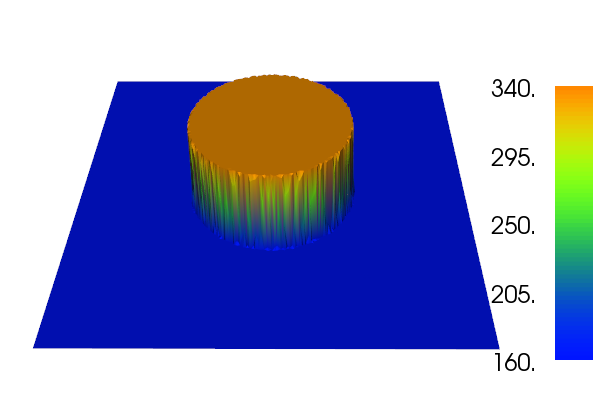}
\includegraphics[width=0.45\textwidth]{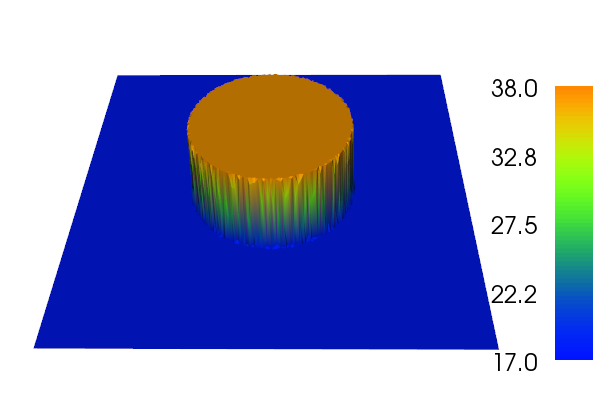}
\caption{Exact Lam\'e parameters $(\lmD)$ created from $\Brh$ with $r_1\approx r_2$, in kPa.}
\label{fig_Lame_non_smooth}
\end{figure} 

The discretization, implementation and computation of the involved variational problems was done using Python and the library FEniCS \cite{Alnaes_Blechta_2015}. For the solution of the inverse problem a triangulation with $4691$ vertices was introduced for discretizing the Lam\'e parameters. The data $u$ was created by applying the forward model \eqref{numerics_prob} to $(\lmD)$ using a finer discretization with $28414$ vertices in order to avoid an inverse crime. For the constant $\cP$ in \eqref{numerics_prob} the choice $\cP = -10^{-4}$ is used. The resulting displacement field for the smooth Lam\'e parameters $(\lmD)$ is depicted in Figure~\ref{fig_Displacement_smooth}. Afterwards, a random noise vector with a relative noise level of $0.5\%$ is added to $u$ to arrive at the noisy data $\ud$. This leads to  the absolute noise level $\delta = \norm{u - \ud}_\LtO \approx 3.1*10^{-7}$. Note that while with a smaller noise level more accurate reconstructions can be obtained, the required computational time then drastically increases  due to the discrepancy principle. Furthermore, a very small noise level is unrealistic in practice.

\begin{figure}[H]
\centering
\includegraphics[width=0.70\textwidth]{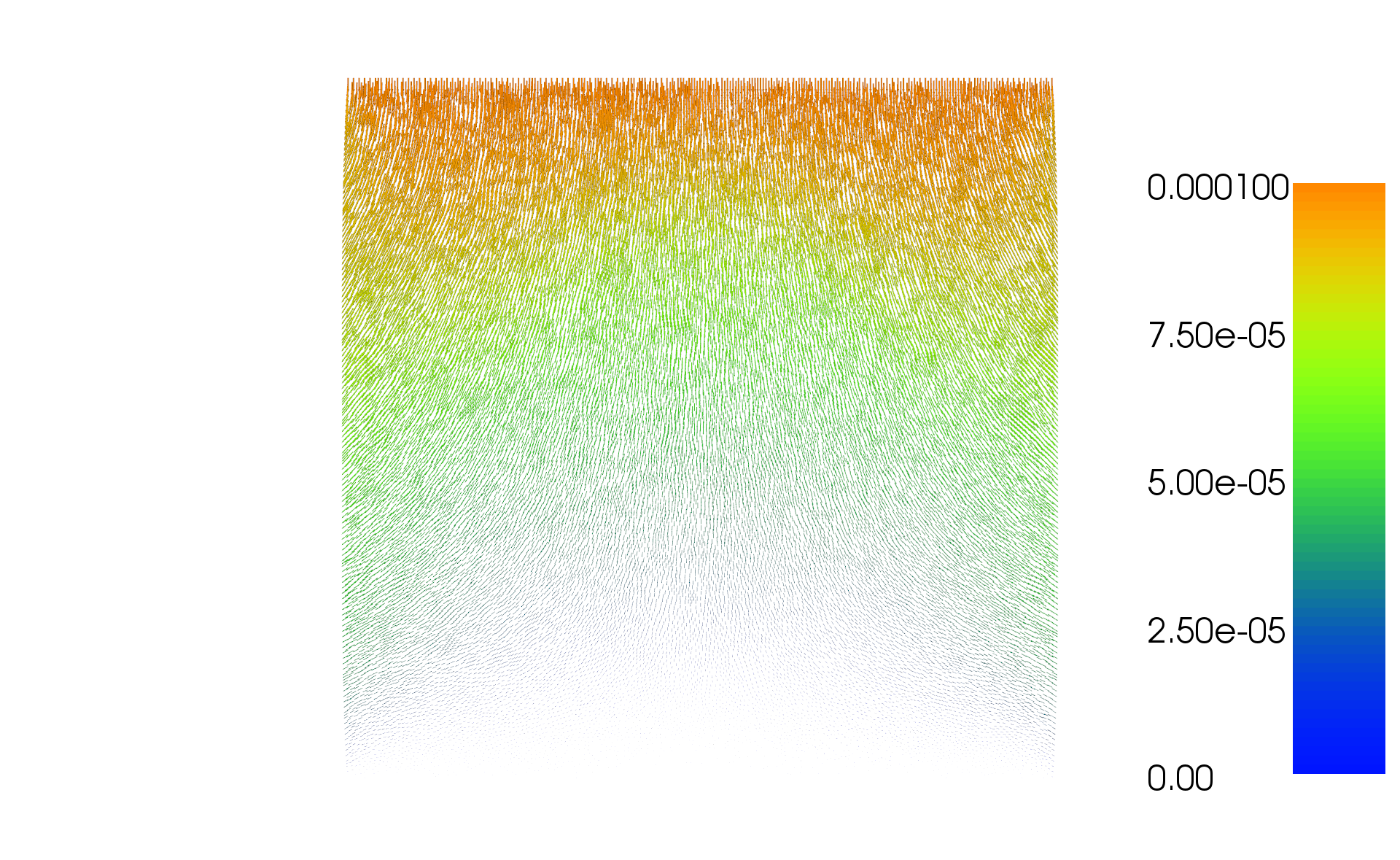}
\caption{Displacement field $u$ corresponding to the Lam\'e parameters $(\lmD)$ depicted in Figure~\ref{fig_Lame_smooth}.}
\label{fig_Displacement_smooth}
\end{figure}

\subsection{Numerical Results}\label{sec:numerics}

In this section we present various reconstruction results for different combinations of operators, Lam\'e parameters and boundary conditions. Since the domain $\Omega$ is two-dimensional, i.e., $N =2$, the operators $F\vert_{\Ds(F)}$ and $\Fc$ are well-defined for any $s > 1$. By our analysis above, we know that the nonlinearity condition holds for the operator $\Fc$ if $s > N/2 + 1$ which suggests to use $s>2$. However, since numerically there is hardly any difference between using $s = 2$ and $s=2+\eps$ for $\eps$ small enough, we choose $s=2$ for ease of implementation in the following examples. When using the operator $\Fc$ we chose a slightly smaller square than $\Omega$ for the domain $\Oo$, which is visible in the reconstructions. Unless noted otherwise, the accelerated Landweber type method \eqref{Nesterov} was used together with the steepest descent stepsize \eqref{steepest_descent} and the iteration was terminated using the discrepancy principle \eqref{discrepancy_principle} together with $\tau = 1$. Concerning the initial guess, when using the operator $\Frs$ the choice $(\lmz) = (2,0.3)$ was made while when using the operator $\Fc$ a zero initial guess was used. For all presented examples, the computation times lay between 15 minutes and 1 hour on a Lenovo ThinkPad W540 with Intel(R) Core(TM) i7-4810MQ CPU @ 2.80GHz, 4 cores.

\begin{example}\label{ex_Fc_main}
As a first test we look at the reconstruction of the smooth Lam\'e parameters (Figure~\ref{fig_Lame_smooth}), using the operator $\Fc$. The iteration terminated after $642$ iterations yields the reconstructions depicted in Figure~\ref{fig_Fswave_01}. The parameter $\mD$ is well reconstructed both qualitatively and quantitatively, with some obvious small artefacts around the border of the inner domain $\Oo$. The parameter $\lD$ is less well reconstructed, which is a common theme throughout this section and is due to the smaller sensitivity of the problem to changes of $\lambda$. However, the location and also quantitative information of the inclusion is obtained.
\begin{figure}[!htb]
\centering
\includegraphics[width=0.45\textwidth]{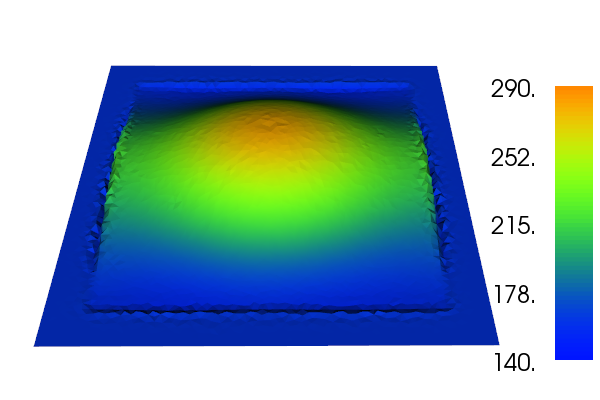}
\includegraphics[width=0.45\textwidth]{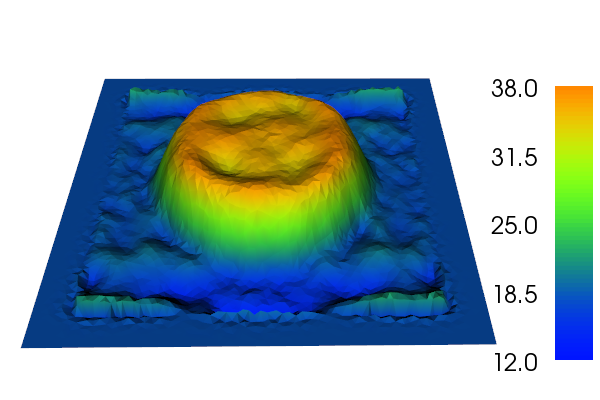}
\caption{Reconstructions of $(\lmD)$, in kPa, Example~\ref{ex_Fc_main}. Smooth Lam\'e parameters (Figure~\ref{fig_Lame_smooth}) - Displacement-Traction boundary conditions - operator $\Fc$.}
\label{fig_Fswave_01}
\end{figure}  
\end{example}

\begin{example}\label{ex_2}
Using the same setup as before, but this time with the operator $\Frs$ instead of $\Fc$ leads to the reconstructions depicted in Figure~\ref{fig_Fs_02}, the discrepancy principle being satisfied after $422$ iterations in this case. Even though information about the Lam\'e parameters can be obtained also here, the reconstructions are worse than in the previous case. 
Note that in the case of mixed boundary conditions the nonlinearity condition has not been verified for the operator $\Frs$, and there is no proven convergence result.
\begin{figure}[!htb]
\centering
\includegraphics[width=0.45\textwidth]{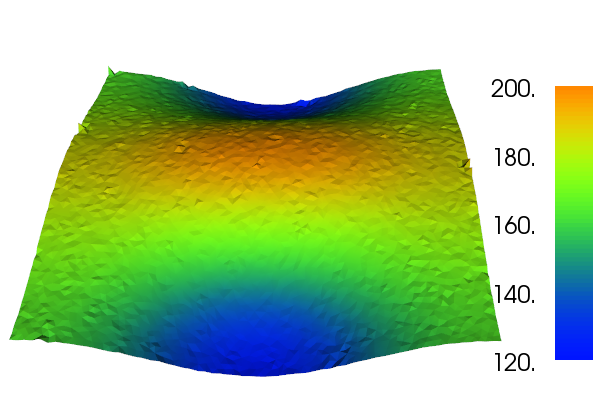}
\includegraphics[width=0.45\textwidth]{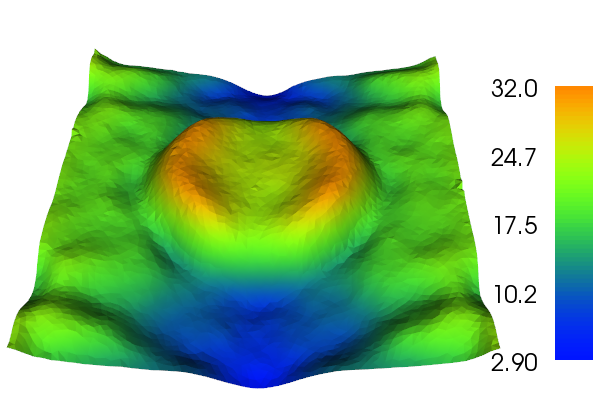}
\caption{Reconstructions of $(\lmD)$, in kPa, Example~\ref{ex_2}. Smooth Lam\'e parameters (Figure~\ref{fig_Lame_smooth}) - Displacement-Traction boundary conditions - operator $\Frs$.}
\label{fig_Fs_02}
\end{figure}  
\end{example}

\begin{example}\label{ex_3}
Going back to the operator $\Fc$ but now using the non-smooth Lam\'e parameters (Figure~\ref{fig_Lame_non_smooth}), we obtain the reconstructions depicted in Figure~\ref{fig_Fswave_3a} after $635$ iterations. We get similar results as for the first test with the main difference that the reconstructed values of the inclusion now fit less well than before, which is due to the non-smoothness of the used Lam\'e parameters.
\begin{figure}[!htb]
\centering
\includegraphics[width=0.45\textwidth]{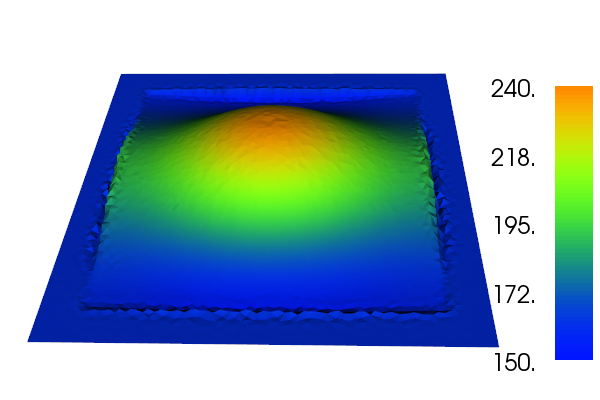}
\includegraphics[width=0.45\textwidth]{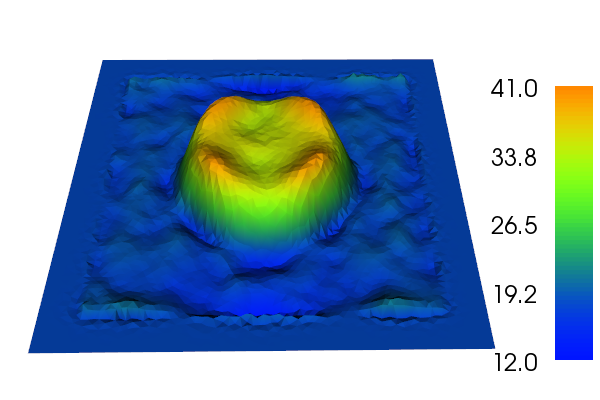}
\caption{Reconstructions of $(\lmD)$, in kPa, Example~\ref{ex_3}. Non-smooth Lam\'e parameters (Figure~\ref{fig_Lame_non_smooth}) - Displacement-Traction boundary conditions - operator $\Fc$.}
\label{fig_Fswave_3a}
\end{figure}  
\end{example}

\begin{example}\label{ex_4}
For the following tests, we want to see what happens if, instead of mixed displacement-traction boundary conditions, only pure displacement conditions are used. For this, we replace the traction boundary condition in \eqref{numerics_prob} by a zero displacement condition while leaving everything else the same. The resulting reconstructions using the operator $\Fc$ for both smooth and non-smooth Lam\'e parameters are depicted in Figures~\ref{fig_Fs_dis_1} and \ref{fig_Fs_dis_2}. The discrepancy principle stopped after $177$ and $194$ iterations, respectively. Compared to the previous tests, it is obvious that the parameter $\lD$ is now much better reconstructed than before in both cases. Also the parameter $\mD$ is well reconstructed, although not as good as in the case of mixed boundary conditions. The influence of the non-smooth Lam\'e parameters in Figure~\ref{fig_Fs_dis_2} can best be seen in the volcano like appearance of the reconstruction of $\mD$.
\begin{figure}[!htb]
\centering
\includegraphics[width=0.45\textwidth]{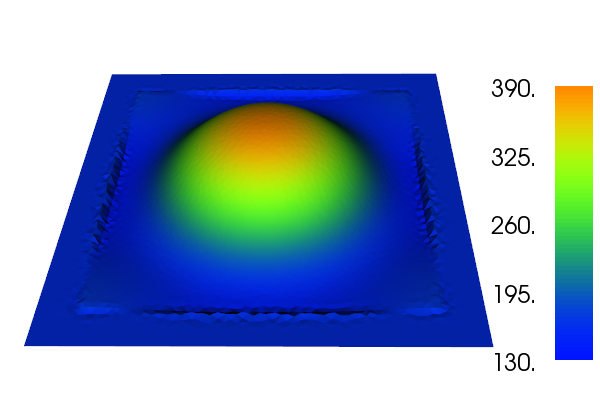}
\includegraphics[width=0.45\textwidth]{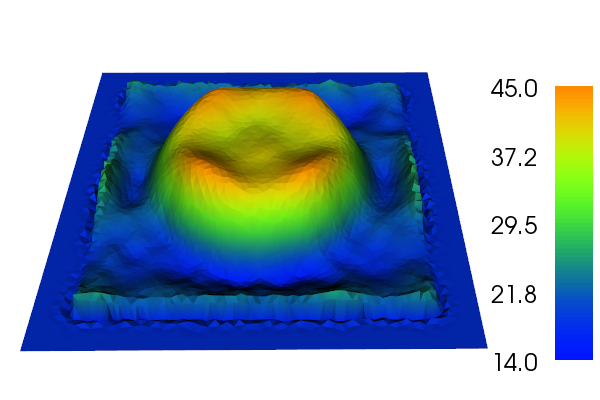}
\caption{Reconstructions of $(\lmD)$, in kPa, Example~\ref{ex_4}. Smooth Lam\'e parameters (Figure~\ref{fig_Lame_smooth}) - Pure displacement boundary conditions - operator $\Fc$.}
\label{fig_Fs_dis_1}
\end{figure}  
\begin{figure}[!htb]
\centering
\includegraphics[width=0.45\textwidth]{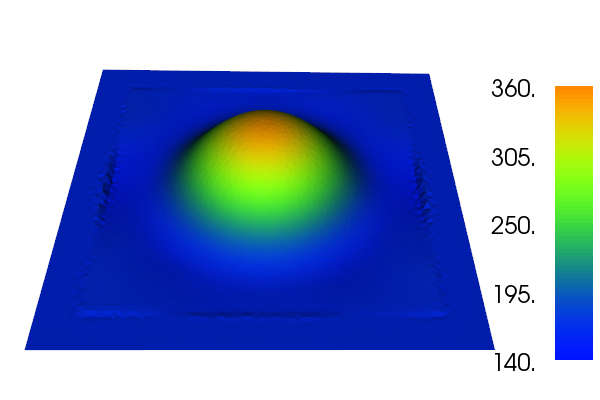}
\includegraphics[width=0.45\textwidth]{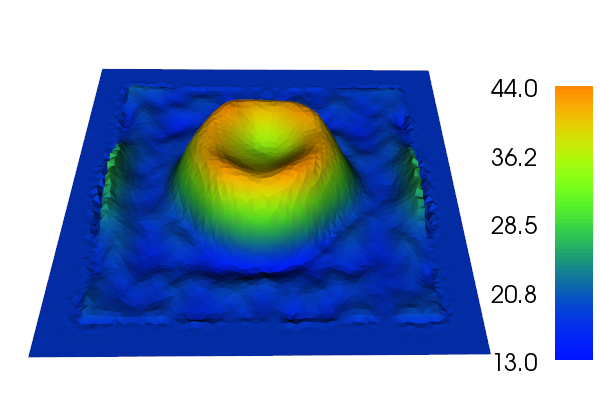}
\caption{Reconstructions of $(\lmD)$, in kPa, Example~\ref{ex_4}. Non-smooth Lam\'e parameters (Figure~\ref{fig_Lame_non_smooth}) - Pure displacement boundary conditions - operator $\Fc$.}
\label{fig_Fs_dis_2}
\end{figure} 
\end{example}

\begin{example}\label{ex_5}
Next, we take a look at the reconstruction of the smooth Lam\'e parameters using $\Frs$ and as before the pure displacement boundary conditions. Interestingly, Nesterov acceleration does not work well in this case and so the Landweber iteration with the steepest descent stepsize was used to obtain the reconstructions depicted in Figure~\ref{fig_Fs_dis_3}, the discrepancy principle being satisfied after $937$ iterations. As with the reconstructions obtained in case of mixed boundary conditions, this case is worse than when using $\Fc$, for the same reasons mentioned above. Note however that in comparison with Figure~\ref{fig_Fs_02}, the inclusion in $\lD$ is much better resolved now than in the other case, which is due to the use of pure displacement boundary conditions.
\begin{figure}[!htb]
\centering
\includegraphics[width=0.45\textwidth]{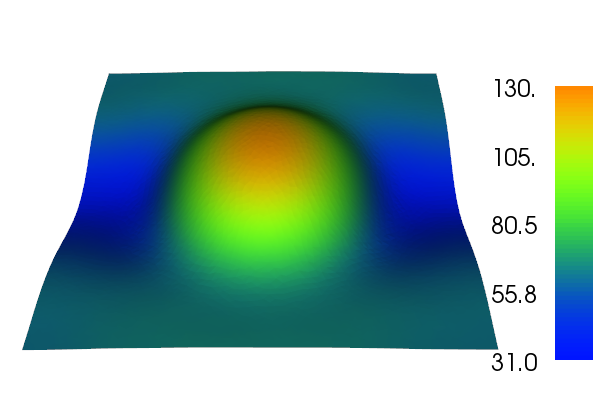}
\includegraphics[width=0.45\textwidth]{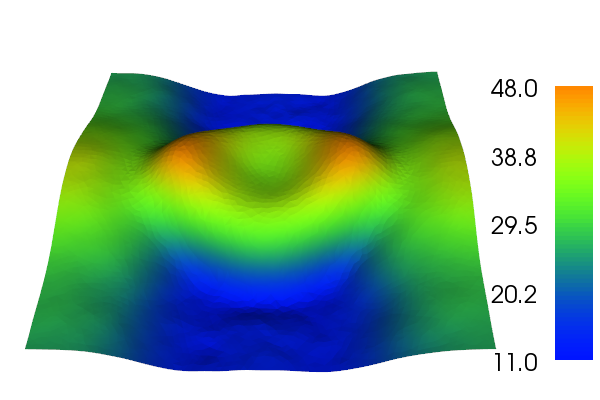}
\caption{Reconstructions of $(\lmD)$, in kPa, Example~\ref{ex_5}. Smooth Lam\'e parameters (Figure~\ref{fig_Lame_smooth}) - Pure displacement boundary conditions - operator $\Frs$.}
\label{fig_Fs_dis_3}
\end{figure}  
\end{example}

\begin{example}\label{ex_6}
For the last test we return to the same setting as in Example~\ref{ex_Fc_main}, i.e., we again use the operator $\Fc$ and mixed displacement-traction boundary conditions. However, this time we consider different exact Lam\'e parameters modelling a material sample with three inclusions of varying elastic behaviour. The exact parameters and the resulting reconstructions, obtained after $921$ iterations, are depicted in Figure~\ref{fig_Fc_dis_trac}. As expected, the Lam\'e parameter $\mD$ is well reconstructed in shape, value and location of the inclusions. Moreover, even though the reconstruction of $\lD$ does not exhibit the same shape as the exact parameter, information about the value and the location of the inclusions was obtained.
\begin{figure}[!htb]
\centering
\includegraphics[width=0.45\textwidth]{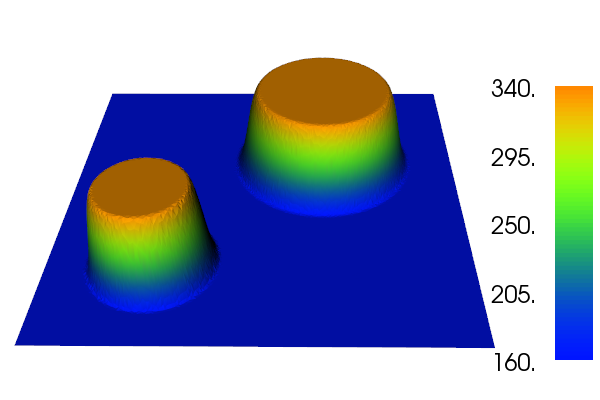}
\includegraphics[width=0.45\textwidth]{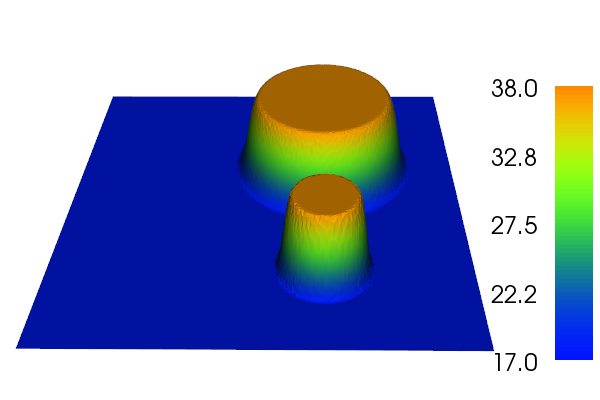}
\\
\includegraphics[width=0.45\textwidth]{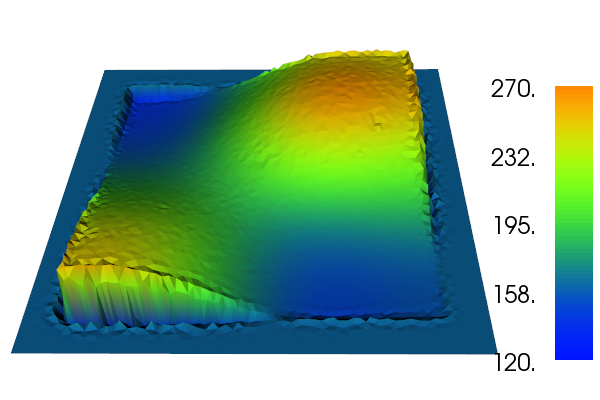}
\includegraphics[width=0.45\textwidth]{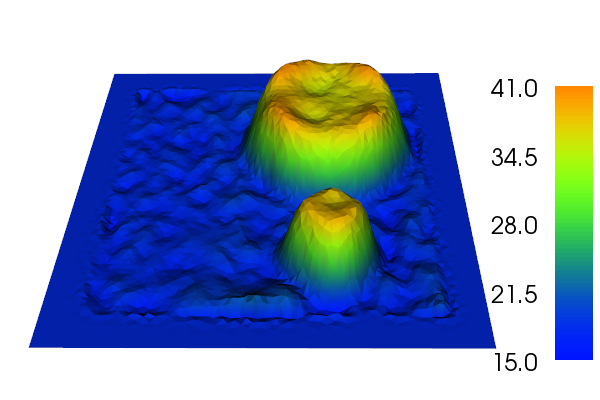}
\caption{Exact Lam\'e parameters $(\lmD)$ (top) and their reconstructions (bottom), in kPa, Example~\ref{ex_6} - Displacement-Traction boundary conditions - operator $\Fc$.}
\label{fig_Fc_dis_trac}
\end{figure}  
\end{example}

\section{Support and Acknowledgements}
The first author was funded by the Austrian Science Fund (FWF): W1214-N15, project DK8. 
The second author was funded by the Danish Council for Independent Research - Natural Sciences: grant 4002-00123. 
The fourth author is also supported by the FWF-project ``Interdisciplinary Coupled Physics Imaging'' (FWF P26687).
The authors would like to thank Dr.\ Stefan Kindermann for providing valuable suggestions and insights during discussions on the subject.

\section*{Appendix. Important results from PDE theory}

Here we collect important results in the theory of partial differential used throughout this paper. Two basic results are the trace inequality \cite{Adams_Fournier_2003}, which states that there exists a constant $\cT = \cT(\Omega) > 0$ such that 
	\begin{equation}\label{ineq_trace}
		\norm{v}_\HohbT \leq \cT \norm{v}_\HoO \,, \quad \forall \, v \in V \,,
	\end{equation}
and Friedrich's inequality \cite{Evans_1998}, i.e., there exists a constant $\cF = \cF(\Omega)>0$ such that
	\begin{equation}\label{ineq_Friedrich}
		\norm{v}_\LtO \leq \cF  \norm{\nabla v}_\LtO \,, \quad \forall \, v\in V \,,		
	\end{equation} 
from which we can deduce
	\begin{equation}\label{ineq_Friedrich_2}
		\norm{v}_\HoO^2 \leq (1+\cF^2) \norm{\nabla v}_\LtO^2 \,, \quad \forall \, v\in V \,.
	\end{equation}
Korn's inequality \cite{Valent_2013} states that there exists a constant $\cK = \cK(\Omega) > 0$ such that
	\begin{equation}\label{ineq_Korn}
		\intV{\norm{\mE{v}}_F^2} \geq \cK^2 \norm{\grad v}_\LtO^2 \,, \qquad \forall \, v \in V \,.
	\end{equation} 
Furthermore, we need the following regularity result
\begin{lemma}\label{lem_regularity}
Let $(\lm) \in \Ds(F)$ with $s>N/2+1$ and $w \in \LtO^N$. Then there exists a unique weak solution $u$ of the elliptic boundary value problem
	\begin{equation} \label{prob_regularity}
	\begin{split}
	    -\div{\sigma(u)} & = w \,, \quad \text{in} \; \Omega \,, \\
	    u \,|_{\GD} & = 0 \,, \\
	    \sigma(u) \n \,|_{\GT} & = 0 \,,
	\end{split}
	\end{equation}
and for every bounded, open, connected Lipschitz domain $\Oo \subset \Omega$ with $\bar{\Omega}_1 \Subset \Omega$ there holds $u \vert_\Oo \in \HtOo^N$ and $-\div{\sigma(u)} = w$ pointwise almost everywhere in $\Oo$. Furthermore, there is a constant $\cR = \cR(\lm,\Oo,\Omega)$ such that 
	\begin{equation}\label{ineq_regularity}
		\norm{u}_\HtOo \leq \cR \norm{w}_\LtOo \,.
	\end{equation}
\end{lemma}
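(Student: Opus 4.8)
The plan is to separate the statement into the routine solvability part and the substantial interior-regularity part. Existence and uniqueness of a weak solution $u \in V$ of \eqref{prob_regularity} follow directly from Theorem~\ref{thm_weak_sol} applied with $f = w$, $\gD = 0$, $\gT = 0$ and $\Phi = 0$: the form $\alm$ is bounded, and since $\mu \geq \mb > 0$ and $\lambda \geq 0$ on $\Ds(F)$, Korn's inequality \eqref{ineq_Korn} makes it coercive on $V$, so Lax--Milgram yields a unique $u$ together with the global bound $\norm{u}_\HoO \leq \cLM \norm{w}_\LtO$. Because $\bar\Oo \Subset \Omega$, the remaining claim is purely \emph{interior}, so no boundary analysis near $\GD$, $\GT$, or the corner set $\bar\GD \cap \bar\GT$ is required; this is exactly why the lemma is localized to $\Oo$.

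For the interior $\HtOo$ estimate I would run Nirenberg's difference quotient method. Fix $\Os$ with $\bar\Oo \Subset \Os$ and $\bar\Os \Subset \Omega$ and a cut-off $\eta \in C_c^\infty(\Os)$ with $\eta \equiv 1$ on $\Oo$, and for $1 \le k \le N$ and small $|h|$ set $D_h^k v := (v(\cdot + h e_k) - v)/h$. Two structural inputs drive the argument. First, coercivity: the integrand of $\alm(v,v)$ is pointwise at least $2\mb\norm{\mE{v}}_F^2$, so Korn's inequality \eqref{ineq_Korn} gives $\alm(v,v) \geq c\,\norm{\grad v}_\LtO^2$ for any compactly supported $v$, which $\eta D_h^k u$ is (it lies in $H^1_0(\Omega)^N \subset V$). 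Second, coefficient regularity: the hypothesis $s > N/2+1$ yields, via the Sobolev embedding $\HsO \hookrightarrow C^1(\bar\Omega) \subset \WoiO$, that $\lm$ are Lipschitz, so their difference quotients stay bounded in $L^\infty$ uniformly in $h$.

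Testing the weak formulation $\alm(u,v) = \int_\Omega w \cdot v \, dx$ by $-D_h^{-k}(\eta^2 D_h^k u)$ and applying the discrete product and summation-by-parts rules, the top-order part reproduces $\alm(\eta D_h^k u, \eta D_h^k u)$ up to lower-order remainders. These split into commutator terms, where $D_h^k$ falls on $\lm$ and is absorbed using $\norm{\lm}_\WoiOo$; cut-off terms, where a derivative falls on $\eta$ and is controlled by $\norm{u}_{H^1(\Os)}$; and the data term $\int_\Omega w \cdot D_h^{-k}(\eta^2 D_h^k u)\,dx$, bounded by $\norm{w}_\LtO\,\norm{\grad(\eta D_h^k u)}_\LtO$. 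Absorbing the top-order factors into the coercive left-hand side by Young's inequality gives a bound on $\norm{\grad(\eta D_h^k u)}_\LtO$ uniform in $h$, whence $\grad u \in H^1(\Oo)$, i.e.\ $u\vert_\Oo \in \HtOo^N$, and $\norm{u}_\HtOo \leq C(\norm{w}_\LtO + \norm{u}_{H^1(\Os)})$. Inserting the global bound from the first step converts this into \eqref{ineq_regularity} with $\cR = \cR(\lm,\Oo,\Omega)$. Finally, with $u \in \HtOo^N$ the stress $\sigma(u)$ lies in $H^1(\Oo)$, so integration by parts is legitimate on $\Oo$ and recovers $-\div{\sigma(u)} = w$ pointwise almost everywhere there.

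The hard part is the difference-quotient estimate for a \emph{system} with variable coefficients. The two delicate points are ensuring that coercivity survives the localization --- here it does, since $\eta D_h^k u$ is compactly supported and Korn's inequality applies to it directly, avoiding any G{\aa}rding correction --- and the bookkeeping of the commutator terms, where the Lipschitz bound on $\lm$, and hence the assumption $s > N/2+1$, is indispensable: pointwise ellipticity without $W^{1,\infty}$ coefficients would not close the estimate. As an alternative to carrying out this computation, the interior estimate can be quoted from the regularity theory for elliptic systems in \cite{McLean_2000, Necas_2011, Agmon_Douglis_Nirenberg_1959}, which is the route used for the full-boundary variant in Theorem~\ref{thm_nonlin_Fs}.
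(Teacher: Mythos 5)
Your proposal is correct, and it takes a genuinely different route from the paper: the paper's entire proof of Lemma~\ref{lem_regularity} is the one-line citation of \cite[Theorem~4.16]{McLean_2000}, whereas you reprove the interior estimate from scratch by the difference-quotient method. Your argument is sound at the delicate points: the test function $-D_h^{-k}(\eta^2 D_h^k u)$ lies in $H_0^1(\Omega)^N \subset V$ for small $\abs{h}$, so Korn's inequality \eqref{ineq_Korn} applies to $\eta D_h^k u$ directly and no G{\aa}rding-type correction is needed; and you correctly isolate where $s > N/2+1$ enters, namely through $\HsO \hookrightarrow \WoiO$, which keeps the difference quotients of $(\lm)$ bounded in $L^\infty$ --- the same embedding that is implicitly needed to invoke McLean's theorem with Lipschitz coefficients. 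What the citation buys the paper, beyond brevity, is that the same source (Theorems~4.16 and 4.18 of \cite{McLean_2000}) also covers the up-to-the-boundary variant underlying Theorem~\ref{thm_nonlin_Fs} and the uniformity condition \eqref{cond_cR_bounded}; what your route buys is a self-contained proof with an explicit trace of how the constant depends on $\norm{(\lm)}_\WoiO$.

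One discrepancy deserves mention, though it counts against the printed statement rather than against your proof. Your argument delivers $\norm{u}_\HtOo \leq C\kl{\norm{w}_\LtO + \norm{u}_{H^1(\Os)}} \leq \cR\norm{w}_\LtO$, with the norm of $w$ over all of $\Omega$ on the right, whereas \eqref{ineq_regularity} as printed carries the localized norm $\norm{w}_\LtOo$. The localized version cannot be literally true: take $w$ nonzero and supported in $\Omega\setminus\bar{\Omega}_1$; then $\norm{w}_\LtOo = 0$, yet $u$ does not vanish on $\Oo$ (positivity of the Green's operator in the scalar model case, or injectivity of the solution operator in general), so no constant $\cR$ can work. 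McLean's interior estimate likewise has the data norm over a larger set. This is harmless downstream, since in the proof of Theorem~\ref{thm_nonlin_Fts} the estimate is applied to $w$ with $\norm{w}_\LtO = 1$, so your weaker (and correct) inequality is exactly what is used; but you should state your conclusion, and the lemma should state \eqref{ineq_regularity}, with $\norm{w}_\LtO$ on the right-hand side.
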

\begin{proof}
This follows immediately from \cite[Theorem~4.16]{McLean_2000}.
\end{proof}


\bibliographystyle{plain}
\bibliography{mybib}

\end{document}